\theoremstyle{plain}
\newtheorem{thm}{Theorem}[section]
\theoremstyle{plain}
\newtheorem{lem}[thm]{Lemma}
\newtheorem{prop}[thm]{Proposition}
\theoremstyle{definition}
\newtheorem{rem}{Remark}[section]
\newenvironment{Assumptions}
{%
\setcounter{enumi}{0}

\begin{enumerate}}%
{\end{enumerate} }
\newcommand{\alp}{\alpha}
\newcommand{\eps}{\ensuremath{\varepsilon}}
\newcommand{\R}{\ensuremath{\mathbb{R}}}
\newcommand{\Dx}{\ensuremath{\Delta x}}
\newcommand{\Dz}{\ensuremath{\Delta z}}
\newcommand{\Dt}{\ensuremath{\Delta t}}
\DeclareMathOperator*{\argmin}{argmin}
\def\N{\mathbb{N}}
\def\Z{\mathbb{Z}}
\numberwithin{equation}{section} \allowdisplaybreaks
\title[Difference-quadrature schemes for IPDEs]
{Difference-quadrature schemes for nonlinear degenerate parabolic integro-PDE}
\date{\today}
\author[I. H. Biswas]{I. H. Biswas}
\address[Imran H. Biswas]{\newline
Seminar for Applied Mathematics,
ETH,
CH-8092 Zurich,
Switzerland}
\email[]{imran.biswas@sam.math.ethz.ch}
\author[E. R. Jakobsen]{E. R. Jakobsen}
\address[Espen R. Jakobsen]{\newline
Norwegian University of Science and Technology, NO--7491, Trondheim, Norway} \email[]{erj@math.ntnu.no}
\author[K. H. Karlsen]{K. H. Karlsen}
\address[Kenneth Hvistendahl Karlsen]{\newline
Centre of Mathematics for Applications,
University of Oslo,
P.O.\ Box 1053, Blindern,
NO--0316 Oslo, Norway}
\email[]{kennethk@math.uio.no}
\urladdr{folk.uio.no/kennethk}
\subjclass[2000]{Primary 45K05, 65M12; 49L25,65L70}
\keywords{Integro-partial differential equation, viscosity solution,
finite difference scheme, error estimate, stochastic
optimal control, L\'{e}vy process, Bellman equation}
\thanks{This work was supported by the Research Council of Norway
  (NFR) through the project "Integro-PDEs: Numerical methods,
  Analysis, and Applications to Finance". The work of K. H. Karlsen
  was also supported trough a NFR Outstanding young Investigator Award. 
This article was written as part of the the international research program
on Nonlinear Partial Differential Equations at the Centre for
Advanced Study at the Norwegian Academy of Science
and Letters in Oslo during the academic year 2008--09.}
\begin{document}

\begin{abstract}
We derive and analyze monotone difference-quadrature schemes for
Bellman equations of controlled L\'{e}vy (jump-diffusion)
processes. These equations are fully non-linear, degenerate parabolic
integro-PDEs interpreted in the sense of viscosity solutions. We
propose new ``direct'' discretizations of the non-local part of the
equation that give rise to monotone schemes capable of handling
singular L\'{e}vy measures.  Furthermore, we develop a new general
theory for deriving error estimates for approximate solutions of
integro-PDEs, which thereafter is applied to the proposed
difference-quadrature schemes.
\end{abstract}

\maketitle

\tableofcontents

\section{Introduction}\label{sec:intro}
In this article we derive and analyze numerical schemes for fully
non-linear, degenerate parabolic integro partial differential
equations (IPDEs) of Bellman type. To be precise, we
consider the initial value problem
\begin{align}\label{eq:integro_pde} 
	u_t+\sup_{\alpha\in\mathcal{A}}\Big\{-L^\alp[u](t,x)+c^\alpha(t,x)u
	-f^\alpha(t,x)-J^{\alpha}[u](t,x)\Big\}= 0&
	\quad \textrm{in}\ Q_T,\\
	\label{eq:bdcond}u(0,x)= g(x)& \quad \text{in}\
\mathbb{R}^N,
\end{align}
where $Q_T:= (0,T]\times\mathbb{R}^N$ and
\begin{align*}
	&L^\alp[\phi](t,x):=\textrm{tr}\big[a^\alpha(t,x)D^2\phi\big]
	+b^\alpha(t,x) D\phi,\\
	&J^{\alpha}[\phi](t,x) :=\int_{\mathbb{R}^M\backslash \{0\}}
	\Big(\phi(t,x+\eta^\alpha(t,x,z))-\phi-{\bf 1}_{|z|\le 1}
	\eta^{\alpha}(t,x,z)D  \phi\Big)\nu(dz),
\end{align*}
for smooth bounded functions $\phi$. Equation \eqref{eq:integro_pde}
is convex and non-local. The coefficients $a^\alpha,\eta^\alpha,
b^\alpha,c^\alpha, f^\alpha,g$ are given functions taking 
values respectively in $\mathbb{S}^N$ ($N\times N$
symmetric matrices), $\mathbb{R}^N$, $\mathbb{R}^N$, $\R$, $\R$, and 
$\R$. The L\'{e}vy measure $\nu(dz)$ is a positive, possibly singular, Radon
measure on $\mathbb{R}^M\backslash\{0\}$; precise assumptions will be given later.

The non-local operators $J^\alp$ can be pseudo-differential
operators. Specifying $\eta\equiv z$ and $\nu(dz)=\frac K{|z|^{N+\gamma}}\,dz$, 
$\gamma\in(0,2)$, give rise to the fractional 
Laplace operator $J=(-\Delta)^{\gamma/2}$. These operators are allowed to
degenerate since we allow $\eta=0$ for $z\neq 0$. The second order
differential operator $L^\alp$ is also allowed to
degenerate since we only assume that the diffusion matrix
$a^{\alpha}$ is nonnegative definite. Due to these two
types of degeneracies, equation \eqref{eq:integro_pde} 
is {\em degenerate parabolic} and there is no (global) smoothing of
solutions in this problem (neither ``Laplacian'' nor ``fractional
Laplacian'' smoothing). Therefore equation \eqref{eq:integro_pde}
will have no classical solutions in general. From the type
non-linearity and degeneracy present in \eqref{eq:integro_pde} the
natural type of weak solutions are the viscosity solutions \cite{Crandall:1992ie,Fleming:1993dy}. 
For a precise definition of viscosity solution of \eqref{eq:integro_pde} we refer to
\cite{Jakobsen:2005jy}. In this paper we will work with H\"older/Lipschitz
continuous viscosity solution of \eqref{eq:integro_pde}-\eqref{eq:bdcond}. 
For other works on viscosity solutions and IPDEs of second order, we refer to
\cite{Alvarez:1996lq, Amadori:2003bo,
   Barles:1997xj, BI:P07, BCI:P07, Benth:2001tv,
   CS:P07,Jakobsen:2005jy,Jakobsen:2006aa,Mikulyavichyus:1996oi,
   Sayah:1991sk} and references therein.

Nonlocal equations such as \eqref{eq:integro_pde} appear as the
dynamic programming equation associated with optimal control of
jump-diffusion processes over a finite time horizon (see
\cite{Mikulyavichyus:1996oi, Pham:1998zt, Biswas:2007td}). Examples
of such control problems include various portfolio optimization
problems in mathematical finance where the risky assets are driven
by L\'{e}vy processes. The linear pricing equations for European and Asian
options in L\'{e}vy markets are also of the form \eqref{eq:integro_pde}
if we take $\mathcal{A}$ to be a singleton.
For more information on pricing theory and its relation to
IPDEs we refer to \cite{Cont:2004gk}.

For most nonlinear problems like \eqref{eq:integro_pde}-\eqref{eq:bdcond}, 
solutions must be computed by a numerical scheme. 
The construction and analysis of numerical schemes for nonlinear 
IPDEs is a relatively new area of research. 
Compared to the PDE case, there are currently only a few works available. 
Moreover, it is difficult to prove that such schemes converge 
to the correct (viscosity) solution. In the literature there are two main strategies for 
the discretization the non-local term in \eqref{eq:integro_pde}. 
One is indirect in the sense that the L\'{e}vy measure is first 
truncated to obtain a finite measure and then the
corresponding finite integral term is approximated by a quadrature rule. 
Regarding this strategy, we refer to \cite{Cont:2004gk,CV} (linear 
or obstacle problems) and \cite{Jakobsen:2005tp,CJ:08} (general non-linear problems). 
The other approach is to discretize the integral term directly. Now there are 3
different cases to consider depending on whether (i) $\int_{|z|<1} \nu(dz)<\infty$,
(ii) $\int_{|z|<1} |z|\nu(dz)<\infty$, or (iii) $\int_{|z|<1} |z|^2
\nu(dz)<\infty$. Case (i) is the simplest one and has been 
considered by many authors, see, e.g., 
\cite{Kushner:1992mq,Cont:2004gk,BLN04,DFV05,AO07_2,Jakobsen:2005tp} and 
references therein. Case (ii) was considered in 
\cite{AO07,Schwab1,FRS08}, and case (iii) in \cite{Schwab1,FRS08}. 
Most of the cited papers restrict their attention to linear, non-degenerate, 
one-dimensional equations or obstacle problems for such equations.

One of the contributions of this paper is a class of 
direct approximations of the non-local part of \eqref{eq:integro_pde}, 
giving rise to new monotone schemes that are capable of handling 
singular L\'{e}vy measures and moreover are supported by a theoretical analysis. 
The proposed schemes are new also in the linear case. 
As in \cite{AO07} (cf.~also \cite{Lynch:2003yq} for 
a related approach), the underlying idea is to perform integration by parts 
to obtain a bounded ``L\'{e}vy''  measure and an integrand 
involving derivatives of the unknown solution. 
In \cite{AO07}, one-dimensional, constant coefficients, 
linear equations (and obstacle problems) are discretized 
under the assumption $\int_{|z|<1} |z|\, \nu(dz)<\infty$.  
Their schemes are high-order and non-monotone, but not supported by 
rigorous stability and convergence results. 
In this paper we discretize general non-linear, multi-dimensional, 
non-local equations without any additional restrictive 
integrability condition on the L\'{e}vy measure. 
More precisely, we provide monotone difference-quadrature 
schemes for \eqref{eq:integro_pde}-\eqref{eq:bdcond} and prove under weak assumptions 
that these schemes converge with a rate to the exact viscosity solution of 
the underlying IPDE. The schemes we put forward and our convergence
results apply in much more general situations than 
those previously treated in the literature.

The second main contribution of this paper is a theory of error 
estimates for a class of monotone approximations schemes 
for the initial value problem \eqref{eq:integro_pde}-\eqref{eq:bdcond}. 
We use this theory to derive error estimates for the proposed numerical schemes. 
For IPDEs in general and non-linear IPDEs in particular, there 
are few error estimates available, see \cite{CV,Schwab2} for linear equations and \cite{Jakobsen:2005tp,Biswas:2006tr,CJ:08} for non-linear equations.

Error estimates involving viscosity solutions first appeared in 
1984 for first order PDEs \cite{CL:2approx}, in 1997/2000
for convex 2nd order PDEs \cite{Krylov:1997nf, Krylov:2000yy},
and in 2005/2008 for IPDEs \cite{CV,Jakobsen:2005tp}.
The results obtained for IPDEs, including those in this paper, are
extensions of the results known for convex second order PDEs, which are 
based on Krylov's method of shaking the coefficients \cite{Krylov:2000yy}. 
Krylov's method produces smooth approximate subsolutions of the equation (or scheme) 
that, via classical comparison and consistency arguments, imply one-sided error estimates.
Based on this idea, there are currently two types of error 
estimates for convex second order PDEs: 
(i) optimal rates applying to specific schemes and equations
(cf., e.g., \cite{Krylov:2005lj,Jakobsen:2003nu}) and 
(ii)  sub-optimal rates that apply to ``any'' monotone consistent
approximation (cf., e.g., \cite{Barles:2006jf,Krylov:2000yy}). In
particular, type (i) results apply when you have a priori
regularity results for the scheme, while type (ii) results do not require this.

In this paper we provide error estimates of type (ii), whereas 
earlier results for IPDEs are of type (i), see 
\cite{CV,Jakobsen:2005tp,Biswas:2006tr,CJ:08}. The problem with 
type (i) results is the difficulty in establishing the required 
priori regularity estimates. In the PDE case this can be achieved for
particular schemes \cite{Krylov:2005lj,Jakobsen:2003nu}, and 
attempts to generalize these schemes to the IPDE setting have only been partially 
successful \cite{Biswas:2006tr,CJ:08}, since the required 
regularity estimates have been obtained only through unnaturally 
strong restrictions on the non-local terms. 
In \cite{CJ:08} the L\'{e}vy measure is bounded 
and in \cite{Biswas:2006tr} the L\'{e}vy measure 
is either bounded or the integral term is independent of $x$ with an 
(essentially) one-dimensional L\'{e}vy measure.  
Of course, by a truncation procedure only bounded L\'{e}vy measures need to be 
considered \cite{Cont:2004gk,Jakobsen:2005tp}, but such
approximations may not be accurate and the resulting
error estimates blow up as the truncation parameter tends to zero.
An advantage of the error estimates in the present paper is that they
apply without any such restrictions. 
In particular, we can handle naturally any 
singular L\'{e}vy measures directly in our framework.

To prove our results we extend the approach of \cite{Barles:2006jf} 
to the non-local setting. To this end, we have to invoke 
a switching system approximation of \eqref{eq:integro_pde} (see Section \ref{sec:sw}). 
Switching systems of this generality have not been studied before. 
In paper \cite{Biswas:2007td}, we provide well-posedness, 
regularity, and continuous dependence results 
for such systems. We also prove that the value function of a
combined switching and continuous control problem solve the switching 
system under consideration.

The remaining part of this paper is organized as follows: 
First of all, we shall end this introduction by listing some relevant notation. 
In Section \ref{sec:well-posed} we list a few standing assumptions 
and provide corresponding well-posedness and regularity 
results for the IPDE problem \eqref{eq:integro_pde}-\eqref{eq:bdcond}.  
In Section \ref{sec:FD} we present a rather general approximation scheme 
for this problem, and show that it is consistent, monotone, and convergent. 
Error estimates for general monotone approximation schemes are stated in Section \ref{sec:ErrBnd}. 
In Section \ref{sec:ApprInt} we present new direct discretizations of the non-local term in
\eqref{eq:integro_pde}, and prove that these discretizations are consistent, 
monotone, and also satisfy the requirements introduced in Section \ref{sec:FD}.  
The switching system approximation of \eqref{eq:integro_pde} 
is introduced and analyzed in Section \ref{sec:sw}. 
The obtained results are utilized in Section \ref{sec:PfErr} 
to prove the error estimate stated Section \ref{sec:ErrBnd}. 
Finally, in Appendix \ref{sec:Lh} we give a standard example of
a (monotone) discretization of the local PDE part of \eqref{eq:integro_pde}
that satisfies the requirements of Section \ref{sec:FD}.

We now introduce the notation we will use in this paper. By
$C, K$ we mean various constants which may change from line to line.
The Euclidean norm on any $\mathbb{R}^d$-type space is
denoted by $|\cdot|$. For any subset $Q\subset \mathbb{R}\times
\mathbb{R}^N$ and for any bounded, possibly vector valued, 
function on $Q$, we define the following norms,
\begin{align*}
	|w|_0 := \sup_{(t,x)\in Q} |w(t,x)|,\qquad 
	|w|_{1} = |w|_0 + \sup_{(t,x)\neq(s,y)}
	\frac{|w(t,x)-w(t,y)|}{|t-s|^{\frac 12}+|x-y|}.
\end{align*}
Note that if $w$ is independent of $t$, then $|w|_1$ is the
Lipschitz (or $W^{1,\infty}$) norm of $w$. We use $C_b(Q)$ to denote the
space of bounded continuous real valued functions on $Q$.
Let $\rho(t,x)$ be a smooth and non-negative function on
$\mathbb{R}\times\mathbb{R}^N$ with unit mass and support in $\{0 < t< 1\}\times
\{|x|<1\}$. For any $\epsilon > 0$, we define the 
mollifier $\rho_{\epsilon}$ by
\begin{equation}\label{eq:mollifier} 
	\rho_{\epsilon}(t,x) := 
	\frac{1}{\epsilon^{N+2}}\rho\Big(\frac{t}{\epsilon^2},
	\frac{x}{\epsilon}\Big).
\end{equation}
In this paper we denote by $h$ the vector
$$
h=(\Dt,\Dx,\Dz)>0,
$$
and any dependence on $\Dt$, $\Dx$, or $\Dz$ will be denoted by
subscript $h$. The grid is denoted by $\mathcal G_h$ and is a subset
of $\bar Q_T$ which need not be uniform or even discrete in
general. We also set $\mathcal G_h^0=\mathcal G_h\cap\{t=0\}$ 
and $\mathcal G_h^+=\mathcal G_h\cap\{t>0\}$.

\section{Well-posedness \& regularity results for the Bellman equation}\label{sec:well-posed}

In this section we give some relevant well-posedness and regularity results for the Bellman 
equation \eqref{eq:integro_pde}-\eqref{eq:bdcond}.  
To this end, we impose the following assumptions:
\begin{Assumptions}
\item\label{A1} The control set $\mathcal{A}$ is
a separable metric space. For any $\alpha \in\mathcal{A}$,
$a^\alpha =
\frac{1}{2}\sigma^\alpha{\sigma^{\alpha}}^T$, and
$\sigma^\alpha, b^\alpha$, $c^\alpha, f^\alpha, \eta^\alpha$
are continuous in $\alpha$ for all $x,t, z$.
\medskip

\item\label{A2} There is
a positive constant $K$ such that for all
$\alp\in\mathcal A$,
\begin{align*}
|g|_1+|\sigma^\alpha|_1+|b^\alpha|_1+|c^\alpha|_1+|f^\alpha|_1\le K.
\end{align*}

\item\label{A3} For every $\alpha\in \mathcal{A}$ and
$z\in \mathbb{R}^M$ there is an $\Lambda\geq0$ such that
\begin{align*}
|e^{-\Lambda
|z|} \eta^\alpha(\cdot,\cdot,z)|_1\le K(|z|\wedge
1) \quad\text{and}\quad
|e^{- \Lambda |\cdot|}\eta^\alpha(t,x,\cdot)|_1\le K.
\end{align*}

\item\label{A4}
$\nu$ is a positive Radon measure on
$\R^M\setminus\{0\}$ satisfying
\begin{align*}
\int_{0<|z|\le 1} |z|^2 \nu(dz)+\quad \int_{|z|\ge 1}
e^{(\Lambda+\epsilon)|z|}\nu(dz)\le K
\end{align*} for some $K\geq0$, $\epsilon > 0$ where $\Lambda$ is
defined in \ref{A3}.
\end{Assumptions}

Sometimes we need the following 
stronger assumptions than \ref{A3} and \ref{A4}:
\begin{Assumptions}
\item[({\bf A}.4')] $\nu$ is a positive Radon 
measure having a density $k(z)$ satisfying
$$
0\leq k(z)\leq\frac{e^{-(\Lambda+\eps)|z|}}{|z|^{M+\gamma}}\quad
\text{for all}\quad z\in\R^M\setminus\{0\},
$$
for some $\gamma\in(0,2)$, $\eps>0$, 
where $\Lambda$ is defined in \ref{A3}.
\medskip

\item[({\bf A}.5)] Assume that \ref{A3} holds and let $\gamma$ as in
({\bf A}.4'). There is a constant $K$ such that for
every $\alpha\in \mathcal{A}$ and $z\in \mathbb{R}^M$
$$
|D_z^k\eta^\alp(\cdot,\cdot,z)|_0
+|D_x^l\eta^\alp(\cdot,\cdot,z)|_0 
\leq Ke^{\Lambda|z|},
$$
for all
\begin{align*}
	\begin{array}{ll}
		k=l=1 & \text{ when $\gamma=0$},\\
		k,l\in\{1,2\}& \text{ when $\gamma\in(0,1)$},\\
		k\in\{1,2,3,4\},\ l\in\{1,2\}& 
		\text{ when $\gamma\in[1,2)$.}
	\end{array}
\end{align*}
\end{Assumptions}

Assumptions \ref{A1}--\ref{A4} are standard and general.
The assumptions on the non-local term are motivated by applications in
finance. Almost all L\'{e}vy models in finance are covered by these
assumptions. It is easy to modify the results in this paper so that
they apply to IPDEs under different assumptions on the L\'{e}vy
measures, e.g., to IPDEs of fractional Laplace type where there is no
exponential decay of the L\'{e}vy measure at infinity.
Finally, assumption (A.5) is not strictly speaking needed in this
paper. We use it in some results because it 
simplifies some of our error estimates.

Under these assumptions the following results hold:

\begin{prop}\label{wellpos_1}
Assume \ref{A1}--\ref{A4}.
\smallskip

\noindent (a) There exists a unique bounded  viscosity solution $u$ of
the initial value problem \eqref{eq:integro_pde}--\eqref{eq:bdcond}
satisfying $|u|_1<\infty$.
\smallskip

\noindent (b) If $u_1$ and $u_2$ are respectively viscosity sub and supersolutions of
\eqref{eq:integro_pde} satisfying $u_1(0,\cdot)\le u_2(0,\cdot)$, then $u_1\le u_2$.
\end{prop}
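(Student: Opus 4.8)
The plan is to establish part (b) first — the comparison principle — and then derive part (a) (existence and uniqueness together with the Lipschitz/H\"older regularity $|u|_1<\infty$) as a consequence via Perron's method and an explicit construction of barriers. This is the standard architecture for viscosity-solution theory of fully nonlinear IPDEs, so I would follow the route of \cite{Jakobsen:2005jy} and \cite{Sayah:1991sk}, adapting it to the present assumptions \ref{A1}--\ref{A4}.

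**Comparison (part (b)).** The heart of the matter is the comparison principle, and the main obstacle is the nonlocal term $J^\alpha[u]$ together with the degeneracy of both $L^\alpha$ and $J^\alpha$. I would argue by contradiction: assuming $\sup_{Q_T}(u_1-u_2)>0$, I would double the variables and penalize, using the test function
\begin{align*}
	\Phi(t,x,s,y)=u_1(t,x)-u_2(s,y)-\frac{|x-y|^2}{2\delta}
	-\frac{|t-s|^2}{2\delta}-\beta\langle x\rangle-\frac{\zeta}{T-t},
\end{align*}
where $\langle x\rangle=(1+|x|^2)^{1/2}$ provides spatial decay of the penalization at infinity (needed because $Q_T$ is unbounded) and the term $\zeta/(T-t)$ forces a strict subsolution. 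At a maximum point one applies the maximum principle for semicontinuous functions in its \emph{nonlocal} form (the Jensen--Ishii lemma adapted to integro-PDEs, as in \cite{Jakobsen:2005jy,Barles:1997xj}), which yields matrix inequalities for the second-order parts and, crucially, a bound on the difference of the nonlocal terms. The splitting of $J^\alpha$ at $|z|\le 1$ and $|z|>1$ is essential: the singular part $|z|\le 1$ is controlled using the second-order penalization and $\int_{|z|\le1}|z|^2\nu(dz)\le K$ from \ref{A4}, while the tail $|z|>1$ is controlled by $\int_{|z|\ge1}e^{(\Lambda+\epsilon)|z|}\nu(dz)\le K$ and the growth bound $|e^{-\Lambda|z|}\eta^\alpha|_1\le K(|z|\wedge1)$ from \ref{A3}. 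Sending $\delta\to0$, then $\beta\to0$, then $\zeta\to0$ produces the contradiction.

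**Existence and regularity (part (a)).** Given comparison, uniqueness is immediate. For existence I would use Perron's method: the supremum of all subsolutions is a solution, provided I can exhibit bounded sub- and supersolutions (barriers). Here I would construct barriers of the form $u^\pm(t,x)=g(x)\pm Ct$ (or $\pm\langle x\rangle$-type functions combined with a linear-in-$t$ term) and verify, using the Lipschitz bounds on the coefficients from \ref{A2} and the integrability from \ref{A3}--\ref{A4}, that the constant $C$ can be chosen so that $u^+$ is a supersolution and $u^-$ a subsolution; the initial datum $g$ being Lipschitz (i.e.\ $|g|_1\le K$) is what makes $J^\alpha[g]$ and $L^\alpha[g]$ well-defined and bounded. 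The regularity estimate $|u|_1<\infty$ — Lipschitz in $x$ and $\tfrac12$-H\"older in $t$ — I would obtain by the classical viscosity-solution technique of comparing $u(t,x)$ with its spatial translates $u(t,x+e)$ (using comparison and the $|\cdot|_1$-bounds on the coefficients) to get the spatial Lipschitz bound, and then deducing the temporal H\"older bound from the spatial regularity together with the equation itself, again exploiting \ref{A2}--\ref{A4}.

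The technically heaviest step is the nonlocal comparison argument, specifically estimating the difference $J^\alpha[u_1](t,x)-J^\alpha[u_2](s,y)$ at the penalized maximum point. The standard trick — that at a maximum of $\Phi$ the map $(x,y)\mapsto u_1(t,x)-u_2(s,y)$ is dominated near $(x,y)$ by the smooth penalization, so its nonlocal increments are controlled by those of the penalization plus the positive tail contributions — is what makes the degenerate case tractable without any ellipticity. I expect all remaining steps (barrier construction, translation argument for regularity) to be routine once this estimate is in hand, since they reduce to applying comparison to carefully chosen competitors.
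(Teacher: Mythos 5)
Your outline is correct and follows essentially the route the paper itself relies on: the paper gives no proof of Proposition \ref{wellpos_1}, deferring entirely to \cite{Jakobsen:2005jy}, where well-posedness is obtained by exactly the machinery you describe --- comparison via doubling of variables combined with a nonlocal maximum principle for semicontinuous functions (cf.\ also \cite{Jakobsen:2006aa}), existence via Perron's method with barrier constructions, and the bound $|u|_1<\infty$ via translation/continuous-dependence arguments in $x$ together with a separate barrier argument for the H\"older-$\tfrac12$ estimate in $t$. Since your proposal matches the approach of the cited source, including the key step of splitting the nonlocal term at $|z|=1$ and using \ref{A3}--\ref{A4} to control the singular and tail parts, there is nothing to flag.
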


The precise definition of viscosity solutions for the non-local
problem \eqref{eq:integro_pde}--\eqref{eq:bdcond} and the proof of
Proposition \ref{wellpos_1} can be found in \cite{Jakobsen:2005jy},
for example.
\section{Difference-Quadrature schemes for the Bellman equation}\label{sec:FD}
Now we explain how to discretize \eqref{eq:integro_pde}--\eqref{eq:bdcond} 
by convergent monotone schemes on a uniform grid (for simplicity). We
start by the spatial part and approximate the non-local part $J^\alp$
as explained later in Section \ref{sec:ApprInt} and the local PDE part
$L^\alp$ by a standard monotone scheme (cf.~\cite{Kushner:1992mq} and
Appendix \ref{sec:Lh}).  
The result is a system of ODEs in $\Dx\, \Z^N\times (0,T)$:
$$
u_t +\sup_{\alpha\in\mathcal{A}}\Big\{-L_h^\alp[u](t,x)
+c^\alpha(t,x)u-f^\alpha(t,x)-J_h^\alpha[u](t,x)\Big\}=0,
$$
where $L_h$ and $J_h$ are monotone, consistent 
approximations of $L$ and $J$, respectively. 

Then we discretize in the time variable 
using two separate $\theta$-methods, one for
the differential part and one for the integral part. For
$\vartheta,\theta\in[0,1]$, the fully discrete scheme reads
\begin{align}\label{FD}
&U^{n}_\beta
=U_\beta^{n-1}-\Dt \sup_{\alpha\in\mathcal{A}}
\Big\{-\theta L_h^\alp[U]^{n}_\beta-(1-\theta)
L_h^\alp[U]^{n-1}_\beta+c^{\alpha,n-1}_\beta U^{n-1}_\beta
\\ &\qquad\qquad\qquad-f^{\alpha,n-1}_\beta 
-\vartheta J_h^\alpha[U]^{n}_\beta-(1-\vartheta)J_h^\alpha[U]^{n-1}_\beta\Big\}
\qquad \text{in}\qquad
\mathcal G_h^+,\nonumber\\
& U_\beta^0=g(x_\beta)\qquad \text{in}\qquad 
\mathcal G_h^0,\label{FD_BC}
\end{align}
where $\mathcal G_h=\Dx \Z^N\times \Dt \{0,1,2,\dots,\frac T \Dt\}$ 
and $U^n_\beta=U(t_n,x_\beta)$, $f^{\alp,n}_\beta
=f^\alp(t_n,x_\beta)$, etc., for $t_n=n\Dt$ ($n\in\N_0$) and
$x_\beta=\beta \Dx$ ($\beta\in\Z^N$).

The approximations $L_h$ and $J_h$ are consistent, satisfying
\begin{align}
& |L^\alp[\phi]-L^\alp_h[\phi]|\leq K_L (|D^2\phi|_0\Dx
+|D^4\phi|_0\Dx^2),\label{consistL}\\
& \label{consistJ}
|J^{\alpha}[\phi]-J^{\alpha}_h[\phi]|
\leq K_I\Dx\begin{cases}
|D^2\phi|_0 & \text{when}\quad \gamma=[0,1),\\
(|D^2\phi|_0+|D^4\phi|_0) & \text{when}\quad \gamma=[1,2),
\end{cases}
\end{align}
for smooth bounded functions $\phi$ and where $\gamma\in(0,2)$ is defined in (A.4'). 
They are also monotone in the sense that they can be written as
\begin{align}\label{monL}
L^{\alpha}_h[\phi](t_n,x_{\bar\beta})=\sum_{\beta\in\Z^N}
l_{h,\beta,\bar\beta}^{\alp,n}\big[\phi(t_n,x_\beta)-\phi(t_n,x_{\bar\beta})\big]
\quad\text{with}\quad l_{h,\beta,{\bar\beta}}^{\alp,n}\geq0,\\ \label{monJ}
J^{\alpha}_h[\phi](t_n,x_{\bar\beta})=\sum_{\beta\in\Z^N}
j_{h,\beta,\bar\beta}^{\alp,n}\big[\phi(t_n,x_\beta)-\phi(t_n,x_{\bar\beta})\big]
\quad\text{with}\quad j_{h,\beta,{\bar\beta}}^{\alp,n}\geq0,
\end{align}
for any $\bar\beta\in\Z^N$ and $n\in\N_0$. 
We also assume without loss of generality that
$j_{\cdot,\beta,{\beta}}^{\cdot,\cdot}=0=l_{\cdot,\beta,{\beta}}^{\cdot,\cdot}$ 
for all $\beta\in\Z^N$. The sum \eqref{monL} is always 
finite, while the sum \eqref{monJ} is finite if
the L\'{e}vy measure $\nu$ is compactly supported. 
With $\gamma\in[0,2)$ defined in (A.4') and $\Dx<1$, we also have that
\begin{align}
&\bar l^{\alp,n}_{\bar\beta} := \sum_{\beta\in\Z^N}
l_{h,\beta,\bar\beta}^{\alp,n}\leq
K_l\sup_\alp\Big\{|a^\alp|_0\Dx^{-2}
+|b^\alp|_0\Dx^{-1}\Big\}\label{barL},\\
&\bar j^{\alp,n}_{\bar\beta}: =
\sum_{\beta\in\Z^N}
j_{h,\beta,\bar\beta}^{\alp,n}\leq K_j\Dx^{-1}.
\label{barJ}
\end{align}

From \eqref{consistL} and \eqref{consistJ} it 
immediately follows that the scheme \eqref{FD} is 
a consistent approximation of \eqref{eq:integro_pde}, with 
the truncation error bounded by
\begin{align}
\begin{split}
&\frac12|\phi_{tt}|_0\Dt +
\sup_{\alp,n}\Big\{|L^\alp[\phi]^n-L^\alp_h[\phi]^n|_0
+|J^\alp[\phi]^n-J^\alp_h[\phi]^n|_0\\
&\qquad+(1-\theta)|L^\alp[\phi]^{n-1}
-L^\alp[\phi]^n|_0+(1-\vartheta)|
J^\alp[\phi]^{n-1}-J^\alp[\phi]^n|_0\Big\},
\end{split}
\label{consist}
\end{align}
for smooth functions $\phi$. The last two terms are again bounded by
\begin{align}
\Dt \sup_{\alp}\Big\{|L^\alp[\phi_t]|_0
+|J^\alp[\phi_t]|_0\Big\}\leq
K\Dt\Big\{|\partial_t D\phi|_0+|\partial_t
D^2\phi|_0\Big\}. \label{eq:non_stand2}
\end{align}

Under a CFL condition, the scheme \eqref{FD} is also monotone, meaning that
there are numbers $b_{\beta,\tilde\beta}^{m,k}(\alp)\geq 0$ such that it can be written as
\begin{align}\label{def_mon}
\sup_\alp\Big\{b_{\bar\beta,\bar\beta}^{n,n}(\alp) U_{\bar\beta}^{n}
-\sum_{\beta\neq\bar\beta}b_{\bar\beta,\beta}^{n,n}(\alp)
U^{n}_\beta-\sum_{\beta} b_{\bar\beta,\beta}^{n,n-1}(\alp)
U_{\beta}^{n-1} -\Dt f^{n-1,\alp}_{\bar\beta}\Big\} =0,
\end{align}
for all $(x_{\bar\beta},t_{n})\in \mathcal G_h^+$.
From \eqref{monL} and \eqref{monJ}, we see that
\begin{align*}
&b_{\bar\beta,\bar\beta}^{n,m}(\alp)=\begin{cases}
1+\Dt\theta\,\bar l^{\alp,m}_{\bar\beta}+\Dt\vartheta\,
\bar j^{\alp,m}_{\bar\beta}&\text{when } m=n, \\
1-\Dt\Big[(1-\theta)\bar
l^{\alp,m}_{\bar\beta}+(1-\vartheta)\bar j^{\alp,m}_{\bar\beta}
-c^{\alp,m}_{\bar\beta}\Big] & \text{when } m=n-1,
\end{cases}\\
&b_{\bar\beta,\beta}^{n,m}(\alp)
=\begin{cases}
\Dt \theta
l_{h,\bar\beta,\beta}^{\alp,m}
+\Dt\vartheta j_{h,\bar\beta,\beta}^{\alp,m}&\text{when } m=n,\\
\Dt (1-\theta)
l_{h,\bar\beta,\beta}^{\alp,m}
+\Dt (1-\vartheta) j_{h,\bar\beta,\beta}^{\alp,m}\qquad\qquad &
\text{when } m=n-1,
\end{cases}
\end{align*}
where $\bar\beta\neq\beta$ and other choices of $m$ give zero. 
These coefficients are positive provided the following CFL condition holds:
\begin{align}\label{CFL}
\Dt\Big[(1-\theta)\bar
l^{\alp,m}_{\beta}+(1-\vartheta)\bar
j^{\alp,m}_{\beta}-c^{\alp,m}_{\beta}\Big]\leq
1\quad\text{for all}\quad\alp,\beta,m,
\end{align}
or alternatively by \eqref{barL} and \eqref{barJ}, if
$a^\alp\not\equiv0$, $c^\alp\geq0$,
$\Dx<1$,
\begin{align*}
\Dt\Big[(1-\theta)K_lC\Dx^{-2}+(1-\vartheta)K_j\Dx^{-1}\Big]\leq
1.
\end{align*}

Existence, uniqueness, and convergence results 
for the above approximation scheme are collected 
in the next theorem, while error estimates are 
postponed to Theorem \ref{ThmErr} in Section \ref{sec:ErrBnd}.

\begin{thm}
\label{FDthm}
Assume \ref{A1}--\ref{A3}, (A.4'), \eqref{consistL}--\eqref{barJ}, and
\eqref{CFL}. 
\smallskip

\noindent (a) There exists a unique bounded solution $U_h$ of \eqref{FD}--\eqref{FD_BC}.
\smallskip

\noindent (b) The scheme is
$L^\infty$-stable, i.e. $|U_h|\leq
e^{\sup_\alp|c^\alp|_0t_n} \big[|g|_0+t_n\sup_\alp|f^\alp|_0\big]$.

\smallskip
\noindent (c) $U_h$ converge uniformly to the viscosity solution $u$ 
of \eqref{eq:integro_pde}--\eqref{eq:bdcond} as $h\rightarrow 0$.
\end{thm}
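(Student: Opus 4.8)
The plan is to treat the three parts in order, exploiting throughout the monotone representation \eqref{def_mon} together with the identity
$b^{n,n}_{\bar\beta,\bar\beta}(\alpha)=1+\sum_{\beta\neq\bar\beta}b^{n,n}_{\bar\beta,\beta}(\alpha)$,
valid for every $\alpha$, which is read off from the explicit formulas for the coefficients and the definitions of $\bar l,\bar j$. This strict diagonal dominance, with the gap equal to $1$ coming from the discrete time derivative, is the common engine behind both solvability and stability. For (a) I would induct on the time level $n$, the case $n=0$ being the prescribed data \eqref{FD_BC}. Given $U^{n-1}\in\ell^\infty(\mathcal G_h)$, solving \eqref{FD} for $U^n$ means finding a zero of $F:\ell^\infty\to\ell^\infty$, $F(V)_{\bar\beta}=\sup_\alpha\{b^{n,n}_{\bar\beta,\bar\beta}(\alpha)V_{\bar\beta}-\sum_{\beta\neq\bar\beta}b^{n,n}_{\bar\beta,\beta}(\alpha)V_\beta-d^\alpha_{\bar\beta}\}$, where $d^\alpha_{\bar\beta}$ gathers the known level-$(n-1)$ data. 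Setting $G(V)=V-\gamma F(V)$ and using $|\sup_\alpha a_\alpha-\sup_\alpha c_\alpha|\le\sup_\alpha|a_\alpha-c_\alpha|$ together with the identity above, one checks that for any $\gamma\le(1+\sup_{\alpha,\bar\beta}\sum_{\beta\neq\bar\beta}b^{n,n}_{\bar\beta,\beta}(\alpha))^{-1}$ (finite by \eqref{barL}--\eqref{barJ}) the coefficients appearing in $G(V)_{\bar\beta}-G(V')_{\bar\beta}$ are nonnegative and sum to $1-\gamma<1$, so $G$ is a contraction on $\ell^\infty$. Banach's theorem then gives a unique bounded $U^n$, with the quantitative $\ell^\infty$ bound supplied by (b).

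\textbf{Stability (b).} Here I would run a discrete maximum principle. Fix $n$ and let $\bar\beta$ be a near-maximizer of $U^n$ over the grid; since the supremum need not be attained on the unbounded grid $\Dx\,\Z^N$, a routine $\varepsilon$-argument is used. From $\sup_\alpha\{\cdots\}=0$ in \eqref{def_mon} I select $\alpha$ nearly attaining the supremum and bound $\sum_{\beta\neq\bar\beta}b^{n,n}_{\bar\beta,\beta}(\alpha)U^n_\beta\le(\sum_{\beta\neq\bar\beta}b^{n,n}_{\bar\beta,\beta}(\alpha))U^n_{\bar\beta}$; the diagonal-dominance identity cancels the level-$n$ terms and leaves $U^n_{\bar\beta}\le\sum_\beta b^{n,n-1}_{\bar\beta,\beta}(\alpha)U^{n-1}_\beta+\Dt f^{n-1,\alpha}_{\bar\beta}$. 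Because $\sum_\beta b^{n,n-1}_{\bar\beta,\beta}(\alpha)=1+\Dt\,c^{n-1,\alpha}_{\bar\beta}$ and all these weights are nonnegative under \eqref{CFL}, this yields $\max_\beta U^n_\beta\le(1+\Dt\,c_0)|U^{n-1}|_0+\Dt\sup_\alpha|f^\alpha|_0$ with $c_0=\sup_\alpha|c^\alpha|_0$, and symmetrically for $-U^n$. A discrete Gronwall iteration, using $(1+\Dt\,c_0)^n\le e^{c_0 t_n}$ and $e^{x}-1\le x e^{x}$, produces exactly the stated bound.

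\textbf{Convergence (c).} I would invoke the half-relaxed limit (Barles--Souganidis) method, available now that the scheme is consistent (via \eqref{consistL}--\eqref{consistJ} and \eqref{consist}--\eqref{eq:non_stand2}), monotone \eqref{def_mon}, and \Linf-stable by (b). Define $\overline{u}=\limsup^{*}U_h$ and $\underline{u}=\liminf_{*}U_h$ as $h\to0$; stability makes both finite. The core step is to show, transferring a test-function touching condition to the scheme by monotonicity and then passing to the limit by consistency, that $\overline{u}$ and $\underline{u}$ are respectively a viscosity sub- and supersolution of \eqref{eq:integro_pde}, with $\overline{u}(0,\cdot)\le g\le\underline{u}(0,\cdot)$ obtained via a barrier at $t=0$. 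Since $\underline{u}\le\overline{u}$ by construction, the comparison principle of Proposition \ref{wellpos_1}(b) forces $\overline{u}\le\underline{u}$; hence $\overline{u}=\underline{u}=u$ and the convergence is locally uniform, upgraded to uniform on $\overline{Q}_T$ using the Lipschitz bound $|u|_1<\infty$ and the uniform-in-$h$ control behind (b).

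\textbf{Main obstacle.} I expect the genuine difficulty to lie in the verification of the sub/supersolution property for the nonlocal operator $J^\alpha$, since the stencil in \eqref{monJ} can be infinite when $\nu$ is not compactly supported. In passing to the limit one must split the nonlocal contribution into a neighborhood of the singularity, controlled by the smoothness of the test function and the consistency bound \eqref{consistJ}, and a tail, controlled by stability together with the integrability and exponential-decay conditions in \ref{A4} (or (A.4')) via dominated convergence. Everything else --- the contraction in (a), the maximum principle in (b), and the abstract half-relaxed-limit machinery in (c) --- is routine given the monotone structure; it is the nonlocal passage to the limit, and the attendant care with the unbounded grid, where the real work lies.
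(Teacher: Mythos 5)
Your proposal is correct and takes essentially the same route as the paper's proof: your damped map $G(V)=V-\gamma F(V)$ is exactly the paper's contraction $T U^n_\beta = U^n_\beta - \eps\cdot(\text{left hand side of \eqref{def_mon}})$ used with Banach's fixed point theorem and induction in $n$; your diagonal-dominance maximum principle for (b) is the paper's ``much the same argument'' yielding $|U^n|_0\le(1+\Dt\sup_\alp|c^\alp|_0)^n[|g|_0+n\Dt\sup_\alp|f^\alp|_0]$; and (c) is the adaptation of the Barles--Souganidis half-relaxed-limits method to the non-local setting that the paper invokes. The only differences are in presentation (you spell out the near-maximizer argument and the tail-splitting for the non-local limit passage, which the paper leaves implicit), not in substance.
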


\begin{proof}
The existence and uniqueness of bounded solutions follow by an
induction argument. Consider $t=t_n$ and assume $U^{n-1}$ is a given bounded
function. For $\eps>0$ we define the operator $T:U^{n}\rightarrow U^n$ by
$$
T U^n_\beta = U^n_\beta -\eps\cdot 
(\text{left hand side of \eqref{def_mon}})
\qquad\text{for all}\qquad\beta\in\Z^M.
$$
Note that the fixed point equation $U^n=TU^n$ is equivalent to
equation \eqref{FD}. Moreover, for sufficiently 
small $\eps$, $T$ is a contraction operator
on the Banach space of bounded functions on $\Dx\,\Z^N$ under the
$\sup$-norm. Existence and uniqueness then follows from the fixed
point theorem (for $U^n$) and for all of $U$ by induction since
$U^0=g|_{\mathcal{G}_h^0}$ is bounded.

To see that $T$ is a contraction we use the definition 
and sign of the $b$-coefficients:
\begin{align*}
&TU^n_\beta-T\tilde U^n_\beta\\
&\leq \sup_\alp\Big\{\Big[1-\eps[1+\Dt(\theta\bar
l^{\alp,n}_\beta+\vartheta\bar j^{\alp,n}_\beta)]\Big](U^n_\beta-\tilde
U^n_\beta)+\eps\Dt(\theta\bar l^{\alp,n}_\beta+\vartheta\bar
j^{\alp,n}_\beta)|U^n_\cdot-\tilde U^n_\cdot|_0\Big\}\\
&\leq (1-\eps)|U^n_\cdot-\tilde U^n_\cdot|_0,
\end{align*}
provided $1-\eps(1+\Dt(\theta\bar l^{\alp,n}_\beta 
+\vartheta\bar j^{\alp,n}_\beta))]\geq0$ for all $\alp,\beta,n$.
Taking the supremum over all $\beta$ and interchanging the role of $U$
and $\tilde U$ proves that $T$ is a contraction.

Much the same argument, utilizing \eqref{def_mon}, establishes 
that $U_h$ is bounded by a constant independent of $h$:
$$
|U^n|_0
\leq (1+\Dt\sup_\alp|c^\alp|_0)^n\Big[|g|_0
+n\Dt\sup_\alp|f^\alp|_0\Big]
\leq e^{\sup_\alp|c^\alp|_0t_n}
\Big[|g|_0+t_n\sup_\alp|f^\alp|_0\Big].
$$
In view of this bound, the convergence of $U_h$ to the solution $u$ of 
\eqref{eq:integro_pde}--\eqref{eq:bdcond} follows by adapting 
the Barles-Souganidis argument \cite{Barles:1991tc} to the present non-local context. 
Alternatively, convergence follows from Theorem \ref{ThmErr} if we also assume (A.5).
\end{proof}

\begin{rem}$\quad$
\smallskip

a. One suitable choice of $J^\alp_h$ will be derived in Section
\ref{sec:ApprInt}, while for $L^\alp_h$ there are several choices
that satisfies \eqref{consistL} and \eqref{monL}, e.g., the scheme by
Bonnans and Zidani \cite{BZ} or the (standard) schemes of Kushner
\cite{Kushner:1992mq}. In Appendix \ref{sec:Lh} we show that one of the
schemes of Kushner fall into our framework if $a^\alp$ is diagonally dominant.

\smallskip

b. For the differential part, the choices $\theta=0 ,\ 1$, and $1/2$ give
explicit, implicit, and Crank-Nicholson discretizations. When
$\vartheta>0$, the integral term is evaluated implicitly. This leads
to linear systems with full matrices and is not 
used much in the literature.

\smallskip

c. By parabolic regularity $D^2\sim \partial_t$ and
\eqref{eq:non_stand2} is similar to $\Dt|\phi_{tt}|_0$. When
$\theta=1/2=\vartheta$ the scheme \eqref{FD} (Crank-Nicholson!) is
second order in time $\mathcal O(\Dt^2)$ and \eqref{consist} is no
longer optimal.

\smallskip

d. When $\gamma=0$ the leading error term in $J_h[u]$ (see
\eqref{consistJ}) comes from
difference approximation of the term $Du \int\eta \nu $. This
difference approximation also give rise the term $\Dx^{-1}$ in \eqref{barJ}.
\smallskip

\end{rem}

\section{Error estimates for general monotone approximations}
\label{sec:ErrBnd}

In this section we present error estimates for nonlinear
general monotone approximation schemes for IPDEs. 
As a corollary we obtain an error estimate for the scheme \eqref{FD}--\eqref{FD_BC} 
defined in Section \ref{sec:FD}. These results, which extend those in \cite{Barles:2006jf} to 
the non-local IPDE context, can be applied to ``any'' L\'{e}vy-type 
integro operator. Earlier results apply to either linear problems, 
specific schemes, or restricted types of L\'{e}vy operators, see
\cite{Schwab2,CV,Jakobsen:2005tp,Biswas:2006tr}. In particular, previous error estimates
do not apply to the approximation scheme \eqref{FD}.

Let us write \eqref{eq:integro_pde} as $u_t+F[u]=0$ where
$F[u]:=F(t,x,u,Du,D^2u,u(t,\cdot))$ denotes the $\sup$ 
part of \eqref{eq:integro_pde}. We write approximations of $u_t+F[u]=0$ as
\begin{align}\label{abs_scheme}
	S(h,t,x,u_h(t,x),[u_h]_{t,x}) &=0 \quad
	\text{in}\quad \mathcal{G}_h^+,\\
	\label{BC_scheme}
	u_h(0,x) &= g_h(x) \quad \text{in}
	\quad \mathcal{G}_h^0,
\end{align}
where $S$ is the approximation of \eqref{eq:integro_pde} defined on the mesh
$\mathcal{G}_h\subset \bar{Q}_T$ with ``mesh'' parameter 
$h=(\Delta t,\Delta x, \Delta z)$ (time, space, quadrature parameters). The
solution is typified by $u_h$ and by $[u_h]_{t,x}$ we denote a
function defined at $(t,x)$ in terms of the values taken by $u_h$
evaluated at points other than $(t,x)$. Note that the grid does not
have to be uniform or even discrete. 

We assume that \eqref{abs_scheme} satisfies the following set of
(very weak) assumptions:
\begin{Assumptions}
	\item[{\bf (S1)}]\label{S1}{\bf(Monotonicity)} There exist 
	$\lambda, \mu\ge 0$, $h_0> 0$ such that, if $|h|\le h_0$, $u\le v$ 
	are functions in $C_b(\mathcal{G}_h)$ and 
	$\phi(t)= e^{\mu t}(a+bt)+c$ for $a,b,c\ge 0$, then
	\begin{align*} 
		S(h,t,x,r+\phi(t),[u+\phi]_{t,x})\ge
		S(h,t,x,r,[v]_{t,x})+\frac{b}{2}-\lambda
		c\quad\textrm{in}~\mathcal{G}_h^+.
	\end{align*}

	\item[{\bf(S2)}]\label{S2}{\bf(Regularity)} For each $h$ and 
	$\phi\in C_b(\mathcal{G}_h)$, the mapping
	$$
	(t,x)\mapsto S\big(h,t,x,\phi(t,x),[\phi]_{t,x}\big)
	$$
	is bounded and continuous in $\mathcal{G}_h^+$ and 
	the function $r\mapsto S(h,t,x, r,[\phi]_{t,x})$ 
	is uniformly continuous for bounded $r$, uniformly in $t,x$.
	
	\smallskip

	\item[{\bf (S3)}]\label{S3i}{\bf(i) (Sub-consistency)} 
	There exists a function $E_1(\tilde{K}, h, \epsilon)$ such 
	that, for any sequence $\{\phi_\epsilon\}_\epsilon$ of 
	smooth bounded functions satisfying
	\begin{align*}
		|\partial_t^{\beta_0} D^{\beta^\prime}\phi_\epsilon|\le
		\tilde{K}\epsilon^{1-2\beta_0-|\beta^\prime|}
		\quad\textrm{in}~\bar{Q}_T,\quad \textrm{for any}~
		\beta_0\in\mathbb{N},\beta^{\prime}\in\mathbb{N}^N,
	\end{align*} where 
	$|\beta^\prime|=\sum_{i=1}^{N}\beta_i^{\prime}$, the following 
	inequality holds in $\mathcal{G}_h^+$:
	\begin{align*}
		S\big(h,t,x,\phi_\epsilon(t,x),
		[\phi_\epsilon]_{t,x}\big)\le \phi_{\epsilon t}
		+F(t,x,\phi_\epsilon,D\phi_\epsilon,D^2\phi,\phi_\epsilon(t,\cdot))
		+E_1(\tilde{K},h, \epsilon).
	\end{align*}

	\item[{\bf (S3)}]\label{s3ii}{\bf (ii) (Super-consistency)} There exists a function 
	$E_2(\tilde{K},h,\epsilon)$ such that, for any sequence 
	$\{\phi_\epsilon\}_\epsilon$ of smooth bounded functions satisfying 
	\begin{align*}
		|\partial_t^{\beta_0} D^{\beta^\prime}\phi_\epsilon|
		\le \tilde{K}\epsilon^{1-2\beta_0-|\beta^\prime|}
		\quad\textrm{in}~\bar{Q}_T,\quad 
		\textrm{for any}~\beta_0\in\mathbb{N},\beta^{\prime}\in\mathbb{N}^N,
	\end{align*} 
	the following inequality holds in $\mathcal{G}_h^+$:
	\begin{align*}
		S\big(h,t,x,\phi_\epsilon(t,x), [\phi_\epsilon]_{t,x}\big)\ge 
		\partial_t \phi_{\epsilon}
		+F(t,x,\phi_\epsilon, D\phi_\epsilon,D^2\phi,\phi_\epsilon(t,\cdot))
		-E_2(\tilde{K}, h, \epsilon).
	\end{align*}
\end{Assumptions}

\begin{rem} 
In (S3), we typically take $\phi_\epsilon=w_\epsilon*\rho_\epsilon$ for 
some sequence $(w_\epsilon)_\epsilon$ of uniformly bounded and Lipschitz continuous 
functions, and $\rho_\epsilon$ is the mollifier defined in Section \ref{sec:intro}.
\end{rem}

\begin{rem} Assumption (S1) implies monotonicity in
$[u]$ (take $\phi = 0$), and parabolicity of the scheme
\eqref{abs_scheme} (take $u= v$). This last point is easier to
understand from the following more restrictive assumption:

\begin{Assumptions}
\item[\bf(S1')](Monotonicity) There exist $\lambda \ge 0,
\bar{K}> 0$ such that if $u\le v; u,v\in C_b(\mathcal{G}_h)$ and 
$\phi:[0,T]\rightarrow \mathbb{R}$ smooth, then
\begin{align*}
	&S(h,t,x,r+\phi(t),[u+\phi]_{t,x})\\
	&\ge S(h,t,x,r,[v]_{t,x})+\phi^\prime(t)
	-\bar{K}\Delta t|\phi^{\prime\prime}(t)|_0
	-\lambda \phi^{+}(t).
\end{align*}
\end{Assumptions} 
It is easy to see that (S1') implies (S1), cf. \cite{Barles:2006jf}.
\end{rem}

The main consequence of (S1) and (S2) is the following comparison principle satisfied
by scheme \eqref{abs_scheme} (for a proof, cf.~\cite{Barles:2006jf}):
\begin{lem}\label{cmpare_scm}
Assume (S1), (S2), $g_1, g_2\in C_b(\mathcal{G}_h)$, and
$u,v\in C_b(\mathcal{G}_h)$ satisfy
\begin{align*}
	S(h,t,x,u(t,x),[u]_{t,x})\le g_1\quad\text{and}\quad
	S(h,t,x,v(t,x),[v]_{t,x})\ge g_2\quad
	\textrm{in}\quad\mathcal{G}_h^+.
\end{align*} 
Then, for $\lambda$ and $\mu$ as in (S1),
\begin{align*}
	u-v\le e^{\mu t}|(u(0,\cdot)-v(0,\cdot))^+
	|_0+2te^{t}|(g_1-g_2)^+|_0.
\end{align*}
\end{lem}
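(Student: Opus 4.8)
The plan is to construct an explicit time-growing barrier and to show, by a maximum-principle argument for the scheme, that $u$ stays below $v$ plus this barrier. Write $m:=|(u(0,\cdot)-v(0,\cdot))^+|_0$ and $\iota:=|(g_1-g_2)^+|_0$, and for a small parameter $c>0$ set
$$
\phi(t):=e^{\mu t}\big(m+2(\iota+\lambda c)\,t\big)+c,
$$
which is of the admissible form $e^{\mu t}(a+bt)+c$ appearing in (S1), with $a=m\ge 0$, $b=2(\iota+\lambda c)\ge 0$. I would prove $u\le v+\phi$ on $\mathcal{G}_h$; letting $c\downarrow 0$ then gives $u-v\le e^{\mu t}(m+2\iota t)$, which is the asserted estimate (with the factor $e^{\mu t}$ in the second term). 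At $t=0$ the inequality holds with room to spare, since $\phi(0)=m+c$ while $u(0,\cdot)-v(0,\cdot)\le m$ on $\mathcal{G}_h^0$.

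First I would record that $w:=v+\phi$ is a supersolution of the scheme for the data $g_1$. Applying (S1) with the trivial ordering $v\le v$ and comparison function $\phi$, and evaluating at $r=v(t,x)$, gives in $\mathcal{G}_h^+$
$$
S\big(h,t,x,w(t,x),[w]_{t,x}\big)\ \ge\ S\big(h,t,x,v(t,x),[v]_{t,x}\big)+\tfrac{b}{2}-\lambda c\ \ge\ g_2+(\iota+\lambda c)-\lambda c\ \ge\ g_1,
$$
where the middle inequality uses the supersolution property of $v$ and $b/2=\iota+\lambda c$, and the last uses $g_2+\iota\ge g_1$. Thus $w$ and $u$ are, respectively, a super- and a subsolution of the \emph{same} scheme with ordered initial data, and it remains to establish the comparison $u\le w$.

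For that comparison I would argue by contradiction on $M:=\sup_{\mathcal{G}_h}(u-w)$, supposing $M>0$. Since $u-w\le -c<0$ on $\mathcal{G}_h^0$, any near-maximizing point $(t_0,x_0)$ with $(u-w)(t_0,x_0)\ge M-\delta$ lies in $\mathcal{G}_h^+$ once $\delta<c$; the regularity (S2) — continuity of $(t,x)\mapsto S$ and uniform continuity of $r\mapsto S$ — is used to absorb the $O(\delta)$ error stemming from the fact that the supremum need not be attained. The essential mechanism is that the time-growth of $\phi$ furnishes, through (S1), the strictly positive gain $b/2$, which must be \emph{propagated forward in time} so that the zeroth-order and history dependence of $S$ is measured against the controlled barrier $\phi$ rather than against the unknown $M$; this is precisely why $\phi$ is taken exponential-times-linear, with $\mu$ chosen large enough to absorb the zeroth-order loss on the growing part of $\phi$. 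Carried through, this forces a strict violation of the pair of sub/supersolution inequalities at $(t_0,x_0)$, whence $M\le 0$. The main obstacle is exactly this step: a naive space--time maximum argument loses an unabsorbable term proportional to $\lambda M$ coming from the zeroth-order term of $S$, and on the non-compact, possibly non-discrete grid the supremum is not attained; both difficulties are overcome by the same device — the strictly positive, exponentially-and-linearly growing barrier together with the $c>0$ perturbation, which localizes the argument away from $t=0$, makes the inequalities strict, and converts the monotonicity gain into control of $|(g_1-g_2)^+|_0$. The detailed execution is the monotone-scheme argument of Barles--Jakobsen, which I would follow.
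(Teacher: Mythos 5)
The paper itself gives no proof of this lemma --- it defers to \cite{Barles:2006jf} --- so your attempt must be measured against that argument. Your first step is correct and is indeed how that argument can be begun: with your notation $m=|(u(0,\cdot)-v(0,\cdot))^+|_0$, $\iota=|(g_1-g_2)^+|_0$, applying (S1) with the trivial ordering $v\le v$, the barrier $\phi(t)=e^{\mu t}\big(m+2(\iota+\lambda c)t\big)+c$ and $r=v(t,x)$ shows that $w:=v+\phi$ satisfies $S(h,t,x,w(t,x),[w]_{t,x})\ge g_2+\iota\ge g_1$ in $\mathcal{G}_h^+$, with $u(0,\cdot)\le w(0,\cdot)-c$. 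The genuine gap is the ``remaining'' comparison $u\le w$. This is not a routine residual step: comparing a sub- and a supersolution of the same scheme with the same data $g_1$ and ordered initial data is exactly the present lemma in the special case $g_1=g_2$, so deferring it to ``the monotone-scheme argument of Barles--Jakobsen'' is circular, and the mechanism you sketch cannot close. Once $\phi$ has been absorbed into $w$, its gain $b/2$ has been entirely spent converting $g_2$ into $g_1$; at a near-maximum point $(t_0,x_0)\in\mathcal{G}_h^+$ of $u-w$ (with $M:=\sup_{\mathcal{G}_h}(u-w)>0$ and defect $\delta$), the only admissible comparison function left in (S1) is the constant $M$ (i.e.\ $a=b=0$, $c=M$), and together with (S2) this yields only
\[
g_1(t_0,x_0)+\omega(\delta)\;\ge\; S\big(h,t_0,x_0,w(t_0,x_0)+M,[u]_{t_0,x_0}\big)\;\ge\; g_1(t_0,x_0)-\lambda M,
\]
where $\omega$ is the modulus from (S2). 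This is vacuous --- precisely the ``unabsorbable $\lambda M$'' you yourself identify, and nothing in your construction removes it. Inserting one further fixed barrier $\varepsilon e^{\mu t}(1+t)$ fails as well: its gain $\varepsilon/2$ is fixed, while the loss $\lambda M$ scales with the unknown $M$.

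The missing idea is to make the (S1)-gain proportional to the unknown supremum, by measuring the excess of $u$ over $v+\phi$ against a \emph{growing} admissible barrier rather than against a constant. Set $\psi(t):=e^{\mu t}(1+t)\ge 1$ and $N:=\sup_{\mathcal{G}_h}\,(u-v-\phi)/\psi$, and suppose $N>0$. Then $u\le v+\Phi$ on $\mathcal{G}_h$ with $\Phi:=\phi+N\psi=e^{\mu t}\big((m+N)+(2(\iota+\lambda c)+N)t\big)+c$, which is again of the admissible form with constant part still equal to $c$: the crucial point is that $N$ enters the $a$- and $b$-slots of (S1), never the $c$-slot, so it is rewarded rather than penalized. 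Any point $(t_0,x_0)$ where the quotient exceeds $N-\delta$ lies in $\mathcal{G}_h^+$, because the quotient is $\le -c<0$ at $t=0$; there, (S1) applied to $u-\Phi\le v$ with $r=v(t_0,x_0)$ bounds $S\big(h,t_0,x_0,(v+\Phi)(t_0,x_0),[u]_{t_0,x_0}\big)$ below by $g_2+\iota+\lambda c+\tfrac N2-\lambda c\ge g_1+\tfrac N2$, while $0\le (v+\Phi-u)(t_0,x_0)\le\delta\psi(T)$, so (S2) bounds the same quantity above by $g_1+\omega(\delta\psi(T))$. Hence $\tfrac N2\le\omega(\delta\psi(T))$, a contradiction for small $\delta$. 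Thus $N\le 0$, i.e.\ $u-v\le e^{\mu t}\big(m+2(\iota+\lambda c)t\big)+c$, and letting $c\downarrow 0$ gives the stated bound (with $e^{\mu t}$ in the second term; the $e^{t}$ in the paper's statement is evidently a misprint, as you noted). Your outline assembles the right ingredients --- the barrier $\phi$, the perturbation $c>0$, near-maximum points, and (S2) --- but without this normalization (equivalently, taking the infimum over admissible slopes $b$), the contradiction you assert cannot be reached; with it, your preliminary reduction to the pair $(u,w)$ is not needed at all.
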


The following theorem is our first main result.

\begin{thm}[Error Estimate]\label{err_est}
Assume \ref{A1}--\ref{A4}, (S1), (S2) hold, and that the 
approximation scheme \eqref{abs_scheme}--\eqref{BC_scheme} has
a unique solution $u_h\in C_b(\mathcal{G}_h)$, for each sufficiently small $h$. 
Let $u$ be the exact solution of \eqref{eq:integro_pde}--\eqref{eq:bdcond}.

\begin{itemize}
\item[ a)]{\bf (Upper Bound)} If (S3)(i) holds, then there
exists a constant $C$, depending only on $\mu, K$ in (S1) and \ref{A2}, such that
\begin{align*}
	u-u_h\le e^{\mu t}|(g-g_{h})^+|_0
	+C\min_{\epsilon > 0}\big(\epsilon
	+E_1(|u|_1,h,\epsilon)\big)
	\quad\textrm{in}~\mathcal{G}_h.
\end{align*}

\item[ b)]{\bf (Lower Bound)} If (S3)(ii) holds, then
there exists a constant $C$, depending only
on $\mu, K$ in (S1) and \ref{A2}, such that
\begin{align*}
	u-u_h\ge -e^{\mu t}|(g-g_{h})^-|_0
	-C\min_{\epsilon > 0}
	\big(\epsilon^{\frac 13}
	+E_2(|u|_1,h,\epsilon)\big)
	\quad\textrm{in}~\mathcal{G}_h.
\end{align*}
\end{itemize}
\end{thm}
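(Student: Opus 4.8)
The plan is to prove the two one-sided bounds separately. In each case I construct a smooth function that is close to the exact solution $u$ and is an approximate sub- (respectively super-) solution of the scheme, and then invoke the scheme's comparison principle, Lemma \ref{cmpare_scm}. The smooth approximants are produced by Krylov's device of shaking the coefficients of \eqref{eq:integro_pde} followed by mollification against $\rho_\epsilon$; convexity of $F$ (a supremum of affine operators) makes the subsolution side easy, with rate $\epsilon$, and the supersolution side delicate, with rate $\epsilon^{1/3}$, which accounts for the asymmetry between a) and b).

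\textbf{Upper bound.} I would first shake the coefficients, enlarging the supremum to range also over space–time shifts $|\xi|\le\epsilon$, $0\le\tau\le\epsilon^2$; by continuous dependence on the coefficients (as behind Proposition \ref{wellpos_1}) the resulting solution $u^\epsilon$ satisfies $u^\epsilon\le u$ and $|u-u^\epsilon|_0\le C|u|_1\epsilon$. Set $\phi_\epsilon:=u^\epsilon*\rho_\epsilon$. Since each $u^\epsilon(t-\tau,x-\xi)$ is a subsolution of the original equation, Jensen's inequality and convexity make $\phi_\epsilon$ a smooth \emph{approximate subsolution}, $\partial_t\phi_\epsilon+F[\phi_\epsilon]\le C\epsilon$, while the mollifier scaling gives $|\partial_t^{\beta_0}D^{\beta'}\phi_\epsilon|\le C|u|_1\epsilon^{1-2\beta_0-|\beta'|}$ as required in (S3)(i) with $\tilde K=|u|_1$. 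Hence $S(h,\cdot,\phi_\epsilon,[\phi_\epsilon])\le C\epsilon+E_1(|u|_1,h,\epsilon)=:g_1$, whereas $S(h,\cdot,u_h,[u_h])=0=:g_2$. Lemma \ref{cmpare_scm} yields $\phi_\epsilon-u_h\le e^{\mu t}|(\phi_\epsilon(0,\cdot)-g_h)^+|_0+2te^t(C\epsilon+E_1)$; since $|u-\phi_\epsilon|_0\le C\epsilon$ we have $u-u_h\le(\phi_\epsilon-u_h)+C\epsilon$ and $|(\phi_\epsilon(0,\cdot)-g_h)^+|_0\le|(g-g_h)^+|_0+C\epsilon$, and minimizing over $\epsilon$ gives the stated bound.

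\textbf{Lower bound.} The obstruction is that $u$ is a \emph{super}solution of the convex equation, so Jensen pushes the wrong way and neither shaking nor mollification of $u$ produces a smooth approximate supersolution. I would therefore route through the switching-system approximation of Section \ref{sec:sw}. After replacing $\sup_{\alpha\in\mathcal A}$ by a maximum over a finite net of controls—with error governed by the continuity in $\alpha$ of \ref{A1}—the solution $v=(v_1,\dots,v_M)$ of the switching system with switching cost $k$ obeys $0\le v_i-u\le Ck^{1/3}$. Because each equation of the system carries only a single (linear) operator $F_i$, the switching structure permits the construction of smooth approximate supersolutions $v_{i,\epsilon}$ of the individual equations, with $|v_{i,\epsilon}-v_i|_0\le C\epsilon$ and the (S3)(ii) derivative bounds. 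Setting $w_\epsilon:=\min_i v_{i,\epsilon}$, one has $|w_\epsilon-u|_0\le C(\epsilon+k^{1/3})$, and at any grid point monotonicity (S1) lets one pass $S$ through the minimum and reduce to the single smooth component realizing it, where the switching constraint is inactive and the $i$-th supersolution property gives $\partial_t w_\epsilon+F[w_\epsilon]\ge -C\epsilon$. Super-consistency (S3)(ii) then yields $S(h,\cdot,w_\epsilon,[w_\epsilon])\ge -C\epsilon-E_2(|u|_1,h,\epsilon)=:g_2$, and Lemma \ref{cmpare_scm} with $u_h$ ($g_1=0$) gives $u_h-w_\epsilon\le e^{\mu t}|(g_h-w_\epsilon(0,\cdot))^+|_0+2te^t(C\epsilon+E_2)$. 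Collecting the errors $\epsilon+k^{1/3}+E_2$, choosing $k\sim\epsilon$, and minimizing over $\epsilon$ delivers the rate $\epsilon^{1/3}$.

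\textbf{Main obstacle.} The genuinely hard step is the lower-bound construction: manufacturing \emph{smooth} approximate supersolutions of \eqref{eq:integro_pde} out of the switching-system solution. This is the sole reason for introducing switching systems and is the origin of the suboptimal exponent, via the $k^{1/3}$ switching error. The feature absent in the purely local theory of \cite{Barles:2006jf} is to carry the nonlocal operators $J^\alpha$ through the shaking, mollification, and switching steps—checking that $J^\alpha$ commutes with mollification and that the continuous-dependence and consistency estimates survive for the possibly singular Lévy measure under \ref{A3}--\ref{A4}. These nonlocal estimates, together with the switching-system analysis, are what Sections \ref{sec:sw} and \ref{sec:PfErr} must supply.
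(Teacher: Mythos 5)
Your proposal is correct and follows essentially the same route as the paper: the upper bound by Krylov's shaking of the coefficients plus mollification to produce smooth subsolutions (the argument the paper delegates to \cite{Jakobsen:2005tp}), and the lower bound via the finite-control reduction, the shaken switching system whose mollified components serve as smooth supersolutions where the switching obstacle is inactive, the pointwise argmin/monotonicity (S1)/super-consistency (S3)(ii) chain followed by the scheme comparison Lemma \ref{cmpare_scm}, and the coupling $k\sim\epsilon$ that produces the $\epsilon^{1/3}$ rate. This is precisely the content of the paper's Lemmas \ref{aux_switch_well} and \ref{ap_smooth_sup} and the ensuing argument in Section \ref{sec:PfErr}.
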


We prove this theorem in Section \ref{sec:PfErr}.

\begin{rem}
Theorem \ref{err_est} applies to all L\'{e}vy type 
non-local operators. Note that the lower bound is
worse than the upper bound, and may not be optimal. In
certain special cases it is possible to prove better bounds, however 
until now such results could only be obtained in the non-degenerate
linear case \cite{Schwab2,CV} or under very strong restrictions on the
non-local term \cite{Biswas:2006tr,CJ:08}. More information
on such non-symmetric error bounds can be found in \cite{Barles:2006jf}.
\end{rem}

\begin{rem}
For a finite difference-quadrature type discretization of
\eqref{eq:integro_pde}, the truncation error would typically look like
\begin{align*}
	&|\phi_t+F(t,x,\phi,D\phi, D^2\phi,\phi(t,\cdot))-S(h,t,x,\phi(t,x),[\phi]_{t,x})|\\ 
	&\le K\sum_{\beta_0}|\partial_t^{\beta_0^0}D^{\beta_0^\prime}\phi|_0
	\Delta t^{k_{\beta_0}}+K\sum_{\beta_1}|\partial_t^{\beta_1^0}
	D^{\beta_1^\prime}\phi|_0{\Delta x}^{k_{\beta_1}}
	+K\sum_{\beta_2}|\partial_t^{\beta_2^0}
	D^{\beta_2^\prime}\phi|_0{\Delta z}^{k_{\beta_2}},
\end{align*}
where $\beta_0=(\beta_0^0,\beta_0')$, $\beta_1=(\beta_1^0,\beta_1')$, 
$\beta_2=(\beta_2^0,\beta_2')$ are 
multi-indices and $k_{\beta_0},k_{\beta_2},k_{\beta_2}$ are real numbers. 
In this case, the function $E$ in (S3) is obtained by taking $\phi:=\phi_\epsilon$ 
in the above inequality:
\begin{align*}
	&E_1=E_2= \tilde{K}K\sum_{\beta_0,\beta_1,\beta_2}
	\Big[\epsilon^{1-2\beta_0^0-|\beta_0^\prime|}
	\Delta t^{k_{\beta_0}}+\epsilon^{1-2\beta_1^0
	-|\beta_1^\prime|}{\Delta x}^{k_{\beta_1}}
	+\epsilon^{1-2\beta_2^0-|\beta_2^\prime|}{\Delta z}^{k_{\beta_2}}\Big].
\end{align*}
An optimization with respect to $\eps$ yields the final convergence rate. 
Observe that the obtained rate reflects a potential 
lack of smoothness in the solution.
\end{rem}

We shall now use Theorem \ref{err_est} to prove error estimates for 
the finite difference-quadrature scheme \eqref{FD}.

\begin{thm}
\label{ThmErr}
Assume \ref{A1}--\ref{A3}, (A.4'), (A.5), \eqref{consistL}--\eqref{barJ}, \eqref{CFL} 
hold, and that $u$ and $U_h$ are the solutions respectively 
of \eqref{eq:integro_pde}--\eqref{eq:bdcond} and \eqref{FD}--\eqref{FD_BC}.

There are constants $K_L, K_J\geq0$, $\delta>0$ such 
that if $\Dx\in(0,\delta)$ and $\Dt$ satisfies 
the CFL condition \eqref{CFL}, then in $\mathcal G_h$,
\begin{align*}
\begin{array}{cl}
-K(\Dt^{1/ 10}+\Dx^{1/5}) \leq u-U_h\leq
K(\Dt^{1/4}+\Dx^{1/2})&\text{for}\quad \gamma\in[0,1),\\
&\\ -K(\Dt^{1/10}+\Dx^{1/10}) \leq u-U_h\leq
K(\Dt^{1/4}+\Dx^{1/4})&\text{for}\quad \gamma\in[1,2).
\end{array}
\end{align*}
\end{thm}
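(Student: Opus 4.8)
The plan is to obtain the stated estimate as a direct application of the abstract result Theorem~\ref{err_est}. Once the scheme \eqref{FD}--\eqref{FD_BC} is written in the abstract form \eqref{abs_scheme} and shown to satisfy (S1), (S2), and (S3)(i)--(ii) with explicit consistency functions $E_1,E_2$, the two-sided bound follows by inserting these into Theorem~\ref{err_est} and optimizing in $\epsilon$. First I would recast \eqref{FD} as $S\big(h,t,x,U_\beta^n,[U]_{t,x}\big)=0$ by means of the monotone representation \eqref{def_mon}. Assumption (S2) (regularity) is then immediate: boundedness and continuity in $(t,x)$, together with uniform continuity in $r$, follow from \ref{A1}--\ref{A2} and the explicit algebraic form of the $b$-coefficients. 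For monotonicity I would verify the stronger assumption (S1'), which implies (S1). Adding a time-dependent perturbation $\phi(t)$ to both $r$ and $[U]$ in \eqref{def_mon} leaves the same-time-level spatial increments unchanged, since the operators $L_h^\alpha,J_h^\alpha$ annihilate functions constant in $x$, while the difference between the two time levels reproduces $\phi'(t)$ up to a remainder controlled by $\bar K\,\Dt\,|\phi''|_0$; the signs $b_{\bar\beta,\beta}^{n,m}(\alpha)\ge0$ guaranteed by the CFL condition \eqref{CFL} preserve the inequality, and the zeroth-order term $c^\alpha U$ yields the loss $-\lambda\phi^+$ with $\lambda$ governed by $\sup_\alpha|c^\alpha|_0$.

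The substantive step is consistency (S3). I would use the mollified test functions $\phi_\epsilon=w_\epsilon*\rho_\epsilon$ from the remark following (S3), which obey $|\partial_t^{\beta_0}D^{\beta'}\phi_\epsilon|_0\le \tilde K\,\epsilon^{1-2\beta_0-|\beta'|}$. Substituting $|D^2\phi_\epsilon|_0\le \tilde K\epsilon^{-1}$ and $|D^4\phi_\epsilon|_0\le \tilde K\epsilon^{-3}$ into the spatial consistency bounds \eqref{consistL}--\eqref{consistJ}, and $|\phi_{\epsilon,tt}|_0,\ |\partial_t D^2\phi_\epsilon|_0\le \tilde K\epsilon^{-3}$ into the time-truncation bounds \eqref{consist}--\eqref{eq:non_stand2}, both the sub- and super-consistency inequalities hold with
\[
E_1=E_2=K\Big(\Dt\,\epsilon^{-3}+\Dx\,\epsilon^{-1}+\Dx^2\,\epsilon^{-3}+\mathbf{1}_{\gamma\in[1,2)}\,\Dx\,\epsilon^{-3}\Big).
\]
Here assumption (A.5) is what guarantees that the direct quadrature discretization constructed in Section~\ref{sec:ApprInt} actually satisfies \eqref{consistJ} with the indicated derivative norms; the quadrature parameter $\Dz$ does not appear separately because its contribution is already absorbed into the $\Dx$-terms of \eqref{consistJ}.

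It then remains to apply Theorem~\ref{err_est} with $\tilde K=|u|_1$, finite by Proposition~\ref{wellpos_1}(a), and to optimize. For the upper bound we minimize $\epsilon+E_1$: since the minimum of such a sum is comparable to the sum of the individually balanced rates, balancing $\epsilon$ against $\Dt\,\epsilon^{-3}$ gives $\Dt^{1/4}$, against $\Dx\,\epsilon^{-1}$ and $\Dx^2\,\epsilon^{-3}$ gives $\Dx^{1/2}$, and (only when $\gamma\in[1,2)$) against $\Dx\,\epsilon^{-3}$ gives $\Dx^{1/4}$, which reproduces the two upper bounds. For the lower bound we minimize $\epsilon^{1/3}+E_2$; the weaker power $\epsilon^{1/3}$ balances $\Dt\,\epsilon^{-3}$ to give $\Dt^{1/10}$, while the governing spatial term is $\Dx^2\,\epsilon^{-3}$, yielding $\Dx^{1/5}$ for $\gamma\in[0,1)$, and $\Dx\,\epsilon^{-3}$, yielding $\Dx^{1/10}$ for $\gamma\in[1,2)$; collecting exponents reproduces the stated lower bounds.

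The hard part will be the uniform verification of monotonicity (S1')/(S1): one must show that the mixed implicit/explicit $\theta$-stepping, combined with the diagonal weights $\bar l^{\alpha,n}_{\bar\beta}\sim\Dx^{-2}$ and $\bar j^{\alpha,n}_{\bar\beta}\sim\Dx^{-1}$ from \eqref{barL}--\eqref{barJ} (which blow up as $h\to0$), still yields the clean increment $\phi'(t)-\bar K\Dt|\phi''|_0-\lambda\phi^+$ with constants independent of $\Dx,\Dt$. This is precisely where the CFL condition \eqref{CFL} enters and where the unbounded non-local quadrature weights could threaten positivity of the $b$-coefficients; the remaining consistency accounting and the $\epsilon$-optimization are then routine bookkeeping.
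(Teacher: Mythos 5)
Your proposal is correct and follows essentially the same route as the paper: recast \eqref{FD} in the abstract form \eqref{abs_scheme} via \eqref{def_mon}, verify (S1)--(S3) under the CFL condition \eqref{CFL}, read off $E_1=E_2$ from \eqref{consist}, \eqref{consistL}, \eqref{consistJ}, \eqref{eq:non_stand2} (your expression agrees exactly with the paper's, including the extra $\Dx\,\epsilon^{-3}$ term for $\gamma\in[1,2)$), and then apply Theorem \ref{err_est} with an optimization in $\epsilon$. The only difference is one of emphasis: the paper treats the verification of (S1)--(S3) as immediate from \eqref{def_mon} and \eqref{consist}, whereas you spell out (S1')/(S2) in detail; your rate bookkeeping for both bounds matches the theorem.
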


\begin{proof}
Let us write the scheme \eqref{FD} in abstract form
\eqref{abs_scheme}. To this end, set $[u]_{t,x}(s,y)= u(t+s,x+y)$ 
and divide \eqref{def_mon} by $\Dt$ to see that \eqref{FD} 
takes the form \eqref{abs_scheme} with
\begin{align*}
	S(h, t_n,x_\beta, r, [u]_{t_n,x_\beta}) 
	=\sup_{\alpha\in\mathcal{A}}\Bigg\{&
	\frac{b_{\beta,\beta}^{n,n}(\alp)}{\Delta t}
	r-\sum_{\bar\beta\neq\beta}
	\frac{b_{\beta,\bar\beta}^{n,n}(\alp)}{\Delta t}[u]_{t_n,x_\beta}
	(0,x_{\bar\beta}-x_\beta)
	\\ &\qquad -\sum_{\bar\beta}
	\frac{b_{\beta,\bar\beta}^{n,n-1}(\alp)}{\Delta t}[u]_{t_n,x_\beta}
	(-\Dt,x_{\bar\beta}-x_\beta)\Bigg\}.
\end{align*}
By its definition \eqref{FD}, monotonicity \eqref{def_mon}, and
consistency \eqref{consist}, this scheme obviously
satisfies assumptions (S1) -- (S3) if the
CFL condition \eqref{CFL} holds. In particular, from
\eqref{consist} and \eqref{consistL}, \eqref{consistJ},
\eqref{eq:non_stand2}, we find that
\begin{align*}
	E_1(\tilde K,h,\eps)=E_2(\tilde K,h,\eps) 
	=\begin{cases}
		C\tilde K(\Dt\eps^{-3}+\Dx\eps^{-1}+\Dx^{2}\eps^{-3}), &\gamma\in[0,1) \\
		C\tilde K(\Dt\eps^{-3}+\Dx\eps^{-1}+(\Dx^{2}+\Dx)\eps^{-3}), &\gamma\in[1,2).
	\end{cases}
\end{align*}
The result then follows from Theorem \ref{err_est} and a minimization
with respect to $\eps$.
\end{proof}

\begin{rem}
The error estimate is independent of $\gamma$ and robust in the sense
that it applies to non-smooth solutions.
\end{rem}

\section{New approximations of the non-local term}
\label{sec:ApprInt}

In this section we derive direct approximations $J^\alp_h[u]$ of 
the non-local integro term $J^\alp[u]$ appearing in \eqref{eq:integro_pde}. 
As in \cite{AO07} (cf.~also \cite{Lynch:2003yq}), the 
idea is to perform integration by parts to reduce 
the singularity of the measure. For the full discretization of \eqref{eq:integro_pde} 
along with convergence analysis, we refer to Section \ref{sec:FD}.

We consider 3 cases
separately: (i) $\int_{|z|<1}\nu(dz)<\infty$, (ii)
$\int_{|z|<1}|z|\nu(dz)<\infty$, and (iii)
$\int_{|z|<1}|z|^2\nu(dz)<\infty$.
Note that in cases (i) and (ii) we
can write the non-local
operator in the form
\begin{align}
\label{decomp}
J^\alp[\phi](t,x)= I^\alp[\phi](t,x)
- \bar b^\alpha(x) D\phi,
\end{align}
where
\begin{align*}
	I^\alp[\phi](t,x) &
	:=\int_{|z|>0}\Big(\phi(t,x+\eta^\alpha(t,x,z))-\phi\Big)\nu(dz),\\ 
	\bar b^{\alpha}(x) & :=\int_{0<|z|<1}\eta^{\alpha}(t,x,z)\nu(dz),
\end{align*}
for smooth bounded functions $\phi$. The reason is that $I^\alp[\phi]$
and $\bar b^\alpha(x)$ are well-defined under assumptions (A.2),
(A.3), (A.4) if either (i) or (ii) holds. 
Furthermore, $\bar b^\alpha(x)$ will be bounded and $x$-Lipschitz. 
The term $\bar b^\alpha D\phi$ will be approximated by quadrature and
upwind finite differences as in Appendix \ref{sec:Lh} leading to a
first order method. We skip the standard details and focus on the
non-local term $I^\alp[\phi]$.

To simplify the presentation a bit, we will only consider the Cartesian $x$-grid 
$\{x_\beta\}_\beta=\Delta x\, \mathbb{Z}^N$, but it is possible 
to consider unstructured non-degenerate families of grids. On our
grid we define a positive and 2nd order interpolation 
operator $i_h$, i.e., an operator satisfying
\begin{align}
	& i_{h}\phi(x)= \sum_{\beta\in \mathbb{Z}^N} w_\beta(x)\phi(x_\beta)
	\quad \text{with}\quad w_\beta(x)\geq0,\label{eq:lint} \\
	&|E_I[\phi](x)|:=|\phi(x)-i_h\phi(x)|\leq K_I\Delta x^{2}
	|D^2\phi|_0,\label{int_error_bnd}
\end{align}
for all $x\in\R^N$ and where $w_\beta(x)\geq0$ are basis functions
satisfying $w_\beta(x_{\bar\beta})=\delta_{\beta,\bar\beta}$ and $\sum_\beta
w_\beta\equiv1$. Linear and multi-linear interpolation satisfy these
assumptions. Note that higher order interpolation is not monotone in general.

We will also need the following monotone difference operators:
\begin{align}
	\delta_{r,h}^{\pm} \phi(r,y) &= \pm\frac{1}{\Delta x}
	\big\{\phi(r\pm\Delta x,y)-\phi(r,y)\big\},\label{delta}\\
	\Delta_{rr,k} \phi(r,y) &= \frac{1}{k^2}
	\big\{\phi(r+k,y)-2\phi(r,y)+\phi(r- k,y)\big\},\label{Delta}
\end{align}
for functions $\phi(r,y)$ on $\R\times\R^K$ for 
some $K\in \N$. For smooth $\phi$ we have
$$
|\delta_{r,h}^{\pm} \phi-\partial_r\phi|\leq \frac12
|\phi_{rr}|_0\Dx,\qquad |\Delta_{rr,k}
\phi-\partial_r^2\phi|\leq 
\frac1{12} |\partial_r^4\phi|_0|k|^2.
$$

\subsection{Finite L\'{e}vy measures} 
Assuming $\int_{|z|<1}\nu(dz)<\infty$, we 
approximate the term $I^\alp[\phi]$ defined in \eqref{decomp} by
\begin{align*}
	I^\alp_h[\phi](t,x)=
	Q_h\Big[(i_h\phi)(t,x+\eta^\alpha(t,x,z))-\phi(t,x)\Big],
\end{align*}
where $Q_h$ denotes a positive quadrature rule on the $z$-grid
$\{z_\beta\}_\beta\subset \R^M$ with maximal grid spacing $\Dz$, satisfying
\begin{align*}
	& Q_{h}[\phi]= \sum_{\beta\in \mathbb{Z}^M} \omega_\beta \phi(z_\beta)
	\quad \text{with}\quad\omega_\beta\geq0,\\
	&|E_Q[\phi]|:=|\textstyle{\int} \phi(z)\nu(dz)-Q_h[\phi]|
	\le K_Q\Delta z^{k_Q} |D^{k_Q}\phi|_0\textstyle{\int}\nu(dz),
\end{align*}
for smooth bounded functions $\phi$, where $K_Q\geq0$ and $k_Q\in\N$.
Many quadrature methods satisfies these
requirements, e.g., compound Newton-Cotes methods of order less than
9 and Gauss methods of arbitrary order. Note that the $z$-grid does
not have to be a Cartesian grid. This method is at most 2nd
order accurate because
\begin{align*}
	&I^\alp[\phi]
	=I^\alp_h[\phi] + E_I[\phi(\cdot,\cdot+\eta^\alpha)]\textstyle{\int}\nu(dz)
	+ E_Q[\phi(\cdot,\cdot+\eta^\alpha)-\phi],
\end{align*}
and it is monotone by construction, satisfying \eqref{monJ} and
\eqref{barJ}. The $\mathcal O(\Dx^{-1})$ term in \eqref{barJ} comes from
the discretization of the $\bar b^\alp$ term in \eqref{decomp}.

\subsection{Unbounded L\'{e}vy measures I} Now we assume that
$\int_{|z|<1}|z|\nu(dz)<\infty$, or more precisely that (A.4') holds
with $\gamma<1$. We consider the one-dimensional and
multi-dimensional cases separately.

\subsubsection{One-dimensional case $(M=1)$.}\label{sec:11}
Now $I^\alpha[\phi]$ in \eqref{decomp} takes the form
\begin{align*}
	I^\alpha[\phi](t,x) &=\int_{\mathbb{R}\setminus\{0\}}
	\big[\phi(t,x+\eta^\alpha(t,x,z))-\phi(t,x)\big]k(z)dz.
\end{align*}
We approximate this term by
\begin{align}\label{I-1}
	\begin{split}
		&I_h^{\alpha}[\phi](t,x)\\
		&=\sum_{n= 0}^\infty \Big[\delta^+_{z,h}
		(i_h\phi)(t,x+\eta^\alpha(t,x,z_n))k_{h,n}^+
		-\delta^-_{z,h}(i_h\phi)(t,x+\eta^\alpha(t,x,z_{-n})) 
		k_{h,n}^-\Big],
	\end{split}
\end{align}
where $z_n=n\,\Dx$, $\delta^\pm_{z,h}$ is defined in
\eqref{delta}, the $x$-interpolation $i_h$ satisfies \eqref{eq:lint}
and \eqref{int_error_bnd}. Moreover,
\begin{align*}
	\begin{cases}
		k_{h,n}^+:=
		\int_{z_n}^{z_{n+1}}\hat{k}(z)dz,\\
		k_{h,n}^-:=\int^{z_{-n}}_{z_{-(n+1)}}\hat{k}(z)dz
	\end{cases} \quad\text{and}\quad
	\hat{k}(z):=
	\begin{cases}
		\int_{-\infty}^zk(\zeta)\, d\zeta, &\text{if}\quad z<0,\\
		\int_{z}^\infty k(\zeta)\, d\zeta, &\text{if}\quad z>0.
	\end{cases}
\end{align*}
By (A.4') ($M=1$ and $\gamma<1$), $0\leq\int_\mathbb{R}\hat{k}(z)dz<\infty$.

To derive this approximation, the key idea is 
to perform integration by parts:
\begin{align*}
	&I^\alpha[\phi](t,x)=\Big(\int_{-\infty}^0+\int_{0}^\infty\Big)
	\big(\phi(t,x+\eta^\alpha(t,x,z))-\phi(t,x)\big)k(z)dz\\ 
	& = \int_{0}^\infty \frac{\partial}{\partial z}
	\big(\phi(t,x+\eta^\alpha(t,x,z))\big)\hat{k}(z)dz
	-\int_{-\infty}^0 \frac{\partial}{\partial z}
	\big(\phi(t,x+\eta^\alpha(t,x,z))\big)\hat{k}(z)dz,
\end{align*}
for bounded $C^1$ functions $\phi$. Write $I^\alpha[\phi]=I^{\alpha,+}[\phi]+I^{\alpha,-}[\phi]$, 
and use quadrature, finite differencing, and interpolation to proceed as follows:
\begin{align*}
	I^{\alpha, +}[\phi](t,x) & :=
	\int_{0}^\infty \partial_z\big[\phi(t,x+\eta^\alpha(t,x,z))\big]\hat{k}(z) dz\\
	& \simeq \sum_{n= 0}^\infty
	\partial_z\big[\phi(t,x+\eta^\alpha(t,x,z))\big]\big|_{z=z_n} k_{h,n}^+\\
	&\simeq \sum_{n= 0}^\infty
	\frac{\phi(t,x+\eta^\alpha(t,x,z_{n}+\Dx))
	-\phi(t,x+\eta^\alpha(t,x,z_n))}{\Delta x}k_{h,n}^+\\
	& \simeq \sum_{n= 0}^\infty
	\frac{(i_h\phi)(t,x+\eta^\alpha(t,x,z_{n}+\Dx))
	-(i_h\phi)(t,x+\eta^\alpha(t,x,z_n))}{\Delta x}k_{h,n}^+.
\end{align*}
In a similar way we can discretize $I^{\alpha,-}[\phi]$ and \eqref{I-1} follows.

The approximation just proposed is consistent since
$$
I^{\alpha}[\phi](t,x)= I_h^{\alpha}[\phi](t,x) 
+ E_Q+ E_{\mathrm{FDM}}+E_I,
$$
where $E_Q$, $E_{\mathrm{FDM}}$, and $E_I$ denote respectively the
error contributions from the approximation of the integral (1st
order), the difference approximation (up-winding, 1st order), and the
2nd order interpolation. These terms can be estimated as follows:
\begin{align*}
	&|E_Q|\le \Delta x \,|\partial_z^2\phi(\cdot+\eta^\alp)|_0
	\int_{\R} \hat k(z) dz,\\
	&|E_{\mathrm{FDM}}|\leq \frac12 \Dx\, |\partial_z^2\phi(\cdot+\eta^\alp)|_0
	\int_{\R} \hat k(z) dz, \\
	&|E_I|\leq 2 \Delta x\, |D_x^2\phi(\cdot+\eta^\alp)|_0 
	\int_{\R} \hat k(z) dz.
\end{align*}

The discretization \eqref{I-1} is also monotone satisfying
\eqref{monJ} and also \eqref{barJ} (when $I_h^\alp$ replaces $J_h^\alp $).
To see this, note that $i_h\phi(x_{\bar\beta})=\phi(x_{\bar\beta})$ and that by
(A.2) $\eta(t,x,0)=0$. Hence we can reorganize the 
sum defining $I_h^{\alp,+}$ and write
\begin{align*}
	& I_h^{\alp,+}[\phi](t,x_{\bar\beta})\\
	& \quad = -\frac{1}{\Delta x}k_{h,0}^+
	\phi(t,x_{\bar\beta})+\frac{1}{\Delta x}
	\sum_{n=1}^\infty(k_{h,n-1}^+
	-k_{h,n}^+)(i_h\phi)(t,x+\eta^\alpha(t,x_{\bar\beta},z_n))\\
	& \quad =\frac{1}{\Delta x}\sum_{n=1}^\infty(k_{h,n-1}^+-k_{h,n}^+)
	\big[(i_h\phi)(t,x_{\bar\beta}+\eta^\alpha(t,x_{\bar\beta},z_n))
	-\phi(t,x_{\bar\beta})\big].
\end{align*}

In a similar way
$$
I_h^{\alp,-}[\phi](t,x_{\bar\beta}) 
= \frac{1}{\Delta x}\sum_{n=1}^\infty(k_{h,n-1}^--k_{h,n}^-)
\big[(i_h\phi)(t,x_{\bar\beta}+\eta^\alpha(t,x_{\bar\beta},z_{-n}))
-\phi(t,x_{\bar\beta})\big]. 
$$

Since $\hat k$ is increasing on 
$(0,\infty)$ and decreasing on $(-\infty,0)$,
\begin{equation*}
	k_{h,n-1}^\pm> k_{h,n}^\pm,
\end{equation*}
and hence by \eqref{eq:lint} and $\sum_\beta w_\beta\equiv1$,
\eqref{monJ} and \eqref{barJ} hold with
\begin{align*}
&j_{h, \beta,{\bar\beta}}^{\alp,n}=\frac1{\Dx}
\sum_{l\in\Z\setminus\{0\}}
w_{\beta}(x_{\bar\beta}+\eta^\alp(t_n,x_{\bar\beta},z_l))(k_{h,|l|-
1}^{\mathrm{sign}(l)}-k_{h,|l|}^{\mathrm{sign}(l)})\geq 0,\\
&\bar j_{\bar\beta}^{\alp,n}=\frac1{\Dx}\sum_{
l\in\Z\setminus\{0\}}(k_{h,|l|-
1}^{\mathrm{sign}(l)}-k_{h,|l|}^{\mathrm{sign}(l)})\sum_\beta
w_{\beta}(x_{\bar\beta}+\eta^\alp(t_n,x_{\bar\beta},z_l))
=\frac{k_{h,0}^++k_{h,0}^-}{\Dx},
\end{align*}
and $k_{h,0}^\pm=\mathcal O(\Dx^{1-\gamma})$. The leading 
$\mathcal O(\Dx^{-1})$ term in \eqref{barJ} comes from discretizing 
the $\bar b^\alp$ term in \eqref{decomp}.

\subsubsection{Multi-dimensional case $(M>1)$.}\label{sec:1M}
In this case we write $I_h^\alp[\phi]$ of \eqref{decomp} in polar
coordinates and propose the following approximation:
\begin{equation}\label{Ih1M}
	I^\alpha_h[\phi](t,x) =\int_{|y|= 1}\sum_{n= 0}^\infty
	\delta_{r,h}^+\big[i_h\phi(t,x+\eta^\alpha(t,x,r_ny))\big]
	k_{h,n}(y)dS_y,
\end{equation}
where $r_n=n\,\Dx$, $dS_y$ is the surface measure on the unit sphere in
$\R^M$, $\delta^\pm_{z,h}$ is defined in
\eqref{delta}, the $x$-interpolation $i_h$ satisfies \eqref{eq:lint}
and \eqref{int_error_bnd}. Moreover,
\begin{align*}
	k_{h,n}(y)
	=\int_{r_n}^{r_{n+1}} \hat{k}(r,y) dr
	\qquad\text{and}\qquad
	\hat{k}(r,y)= \int_r^\infty k(sy)s^{M-1}ds.
\end{align*}
By assumption (A.4') with $\gamma\in(0,1)$, $0\leq\int_0^\infty\hat k(r,y) dr\leq
C<\infty$ for all $|y|=1$.

To derive this approximation we use polar coordinates and integrate by
parts in the radial direction. Let $\phi$ be a bounded $C^1$ function, and set
$$
G^\alp(t,x,z):=\phi(t,x+\eta^\alpha(t,x,z))-\phi(t,x).
$$
Then
\begin{align*}
	I^\alpha[\phi](t,x) & = 
	\int_{\mathbb{R}^M \setminus\{0\}}G^\alp(t,x,z)k(z) dz\\
	& = \int_{|y|= 1}\big[\int_0^\infty G^\alp(t,x,ry) r^{M-1}k(ry) dr\big] dS_y
	\\ & = \int_{|y|= 1}\big[\int_0^\infty
	\frac{\partial }{\partial r}G^\alp(t,x,ry) \hat{k}(r,y) dr\big] dS_y,
\end{align*}
and \eqref{Ih1M} follows by discretizing the inner integral as in
Section \ref{sec:11}.

This is a consistent first order 
approximation of $I^{\alpha}[\phi]$ since
\begin{align*}
	I^{\alpha}[\phi](t,x)= I_h^{\alpha}[\phi](t,x) 
	+ E_Q+ E_{\mathrm{FDM}}+E_I,
\end{align*}
where $E_Q$, $E_{\mathrm{FDM}}$, $E_I$ have the same meaning 
as in Section \ref{sec:11}, and these terms can be estimated as follows:
\begin{align*}
	&|E_Q|\le \Delta x \,|D_z^2\phi(\cdot+\eta^\alp)|_0M_{\hat k},\\
	&|E_{\mathrm{FDM}}|\leq \frac12 \Dx\,
	|D_z^2\phi(\cdot+\eta^\alp)|_0M_{\hat k}, \\
	&|E_I|\leq \Delta x\, |D_x^2\phi(\cdot+\eta^\alp)|_0 M_{\hat k},
\end{align*}
where $M_{\hat k}=\int_{|y|=1}\int_0^\infty \hat k(r,y) dr\, dS_y$. 

The approximation $I^{\alpha}_h[\phi]$ is also monotone, 
and satisfies \eqref{monJ} and \eqref{barJ}. 
This follows as in Section \ref{sec:11}, since
$I^\alpha_h[\phi](t,x_{\bar\beta})$ can be written as
\begin{align*}
	\frac{1}{\Delta x}\int_{|y|= 1} 
	\sum_{n=1}^\infty\big[k_{h,n-1}(y)-k_{h,n}(y)\big]
	\Big[(i_h\phi)(t,x_{\bar\beta}+\eta^\alpha(t,x_{\bar\beta},r_ny))
	-\phi(t,x_{\bar\beta})\Big]dS_y,
\end{align*}
where for fixed $y$, $k_{h,n}(y)$ is a decreasing function in $n$
since $\hat k(r,y)$ decreasing in $r$. Moreover, $\bar
l^{\alp,n}_\beta$ has a term like $\frac{1}{\Dx}k_{h,0}(y)=\mathcal
O(\Dx^{-\gamma})$ plus the leading $\mathcal O(\Dx^{-1})$ term which
comes from the discretization of the $\bar b^\alp$ term in \eqref{decomp}.

\subsection{Unbounded L\'{e}vy measures II}\label{g12}
We assume that $\int_{|z|<1}|z|^2\nu(z)dx<\infty$, or more precisely
that (A.4') hold with $\gamma\in[1,2)$. In this case the 
decomposition \eqref{decomp} is not valid. Again, we consider the one-dimensional and 
multi-dimensional cases separately.

\subsubsection{One-dimensional L\'{e}vy process $(M=1)$.}\label{sec:21}
Now the nonlocal operator takes the form
\begin{align*}
	J^\alpha[\phi](t,x)= \int_{\mathbb{R}\setminus\{0\}}
	\big[\phi(t,x+\eta^\alpha(t,x,z))-\phi(t,x)-\eta^\alpha(t,x,z)
	D  \phi\big]k(z)dz,
\end{align*}
or, after two integrations by parts (more details are given below),
\begin{align}\label{J21}
	J^\alpha[\phi](t,x)=J^{\alpha,+}[\phi](t,x)
	+J^{\alpha,-}[\phi](t,x)-\tilde{b}^\alpha(t,x) D  \phi,
\end{align}
where $\tilde{b}^\alpha(t,x)=\int_{-\infty}^{\infty} \partial_z^2\eta^\alpha(t,x,z)\tilde{k}(z) dz$,
\begin{align}
	J^{\alpha,\pm}[\phi] & =\pm \int_0^{\pm\infty}
	\partial_z^2\big[\phi(t,x+\eta^\alpha(t,x,z))\big]\tilde{k}(z)dz,
	\label{J21pm}\\
	\tilde{k}(z) & = 
	\begin{cases} \int_{-\infty}^z\int_{-\infty}^wk(r)dr\, dw, 
		&\text{for}~z< 0\\
		\int_{z}^\infty\int_w^{\infty} k(r)dr\, dw, &\text{for $z> 0$}.
	\end{cases}
	\nonumber
\end{align}
By (A.4') ($M=1$, $\gamma<2$), $0\leq \tilde
k(z) \leq C
|z|^{1-\gamma}e^{-(\Lambda+\eps)|z|}$ and $\tilde k$ is integrable.

Note that $\tilde{b}^\alpha$ is bounded and $x$-Lipschitz, and that
$\tilde{b}^\alpha  D  \phi$ can be discretized using quadratures and
finite differences as in Appendix \ref{sec:Lh}. This leads to a first order monotone
(upwind) approximation -- we skip the standard details.

We propose the following approximation of $J^{\alpha,\pm}[\phi]$:
\begin{equation}\label{appr21}
	J^{\alpha,\pm}_h[\phi](t,x)=
	\sum_{n=0}^\infty\Delta_{zz,\Dz}
	\big[i_h\phi(t,x+\eta^\alpha(t,x,z_{n}))\big]
	\tilde{k}_{h,n}^\pm,
\end{equation}
where $z_n=n\Dx$ (not $n\Dz$!), $\Delta_{zz,\Dz}$ is defined in
\eqref{Delta}, the $x$-interpolation $i_h$ satisfies \eqref{eq:lint} and
\eqref{int_error_bnd}. Moreover,
\begin{equation*}
	\tilde{k}_{h,n}^+=
	\int_{z_n}^{z_{n+1}}\tilde{k}(z)dz
	\quad\text{and}\quad
	\tilde{k}_{h,n}^-=
	\int_{z_{-n-1}}^{z_{-n}}\tilde{k}(z)dz.
\end{equation*}
The approximation \eqref{appr21} can be derived from \eqref{J21pm} using
quadrature, finite differencing, and interpolation. 

To obtain \eqref{J21pm} and \eqref{J21}, we integrate by parts twice:
\begin{align*}
	&\int_0^\infty \big[\phi(t,x+\eta^\alpha(t,x,z))-\phi(t,x)
	-\eta^\alpha(t,x,z)D\phi\big]k(z)dz\\
	& = \Big[\big[\phi(t,x+\eta^\alpha(t,x,z))-\phi(t,x)
	-\eta^\alpha(t,x,z)D\phi\big]\big(-\int_z^\infty k(w)dw\big)\Big]_{z=0}^{z=\infty}\\
	&\qquad+\int_0^\infty \partial_z\big[\phi(t,x+\eta^\alpha(t,x,z))
	-\eta^\alpha(t,x,z)D\phi \big]\big(\int_z^\infty k(w)dw\big) dz\\
	& = 0+\Big[\partial_z\big[\phi(t,x+\eta^\alpha(t,x,z))
	-\eta^\alpha(t,x,z)D\phi\big]\big(-\tilde{k}\big)(z)\Big]_0^\infty\\
	&\quad+\int_0^\infty
	\partial_z^2\big[\phi(t,x+\eta^\alpha(t,x,z))
	-\eta^\alpha(t,x,z)D\phi\big]\tilde{k}(z)dz\\
	&= 0+0+\int_0^\infty
	\partial_z^2\big[\phi(t,x+\eta^\alpha(t,x,z))\big]\tilde{k}(z)dz
	-D\phi\int_0^\infty
	\partial_z^2\eta^\alpha(t,x,z)\tilde{k}(z) dz.
\end{align*}
In view of this result and similar computations for 
the integral on $(-\infty,0)$, \eqref{J21} follows. These
computations are rigorous if 
$\phi(t,x+\eta),\partial_z\phi(t,x+\eta),\partial_z^2\phi(t,x+\eta)$ and 
$\eta,\partial_z\eta,\partial_z^2\eta$ are $z$-integrable 
and bounded by $e^{\Lambda|z|}$ at infinity.

The approximation is consistent and has the error expansion
\begin{align*}
	J^{\alpha,\pm}[\phi](t,x)= J_h^{\alpha,\pm}[\phi](t,x) + E^\pm_Q
	+ E^\pm_{\mathrm{FDM}}+E^\pm_I,
\end{align*}
where $E_Q$, $E_{\mathrm{FDM}}$, $E_I$ have the same meaning as in Section \ref{sec:11}, and these 
terms can be estimated as follows:
\begin{align*}
	&|E_Q^\pm|\le \Delta x \,|\partial_z^3\phi(\cdot+\eta^\alp)|_0
	\int_{\R} \tilde k(z) dz,\\
	&|E_{\mathrm{FDM}}^\pm|\leq \frac1{24} \Dz^2\,
	|\partial_z^4\phi(\cdot+\eta^\alp)|_0\int_{\R} \tilde k(z) dz, \\ 
	&|E_I^\pm|\leq 4\frac{\Delta x^2}{\Dz^2}\, 
	|D_x^2\phi(\cdot+\eta^\alp)|_0 
	\int_{\R} \tilde k(z) dz.
\end{align*}

The proposed approximation is first order accurate if $\Dz=\Dx^{1/2}$, it 
is monotone satisfying \eqref{monJ}, and \eqref{barJ} holds if $\Dz=\Dx^{1/2}$. 
These properties follow as in Section \ref{sec:11}, since 
$J^{\alpha,\pm}_h[\phi](t,x_{\bar\beta})$ can be written as
\begin{align*}
	&\frac{1}{\Delta z^2}\tilde{k}^\pm_{h,0}
	\big[(i_h\phi)(t,x+\eta^\alpha(t,x_{\bar\beta},z_{\mp1}))
	-\phi(t,x_{\bar\beta})\big]\\
	&+\frac{1}{\Delta z^2}\sum_{n=1}^{\infty}
	(\tilde{k}^\pm_{h,n+1}-2\tilde{k}^\pm_{h,n}+\tilde{k}^\pm_{h,n-1})
	\big[(i_h\phi)(t,x+\eta^{\alpha}
	(t,x_{\bar\beta},z_{\pm n}))-\phi(t,x_{\bar\beta})\big],
\end{align*}
and, by convexity of $\tilde k(z)$ on $(0,\infty)$ and $(-\infty,0)$,
$$
\tilde{k}^\pm_{h,n+1}-2\tilde{k}^\pm_{h,n}+
\tilde{k}^\pm_{h,n-1}\geq0\qquad\text{for}\qquad n\geq 1.
$$
Moreover, $\bar j_{\beta}^{\alp,n}$ equals
$\frac2{\Dz^2}(\tilde k_{h,0}^+-\tilde
k_{h,1}^++\tilde k_{h,0}^--\tilde k_{h,1}^-)=\mathcal
O(\Dx^{2-\gamma}/\Dz^2)$ plus a $\mathcal O(\Dx^{-1})$ term from the
discretization of the $\tilde b^\alp$-term in \eqref{J21}. When
$\Dz=\Dx^{1/2}$ the leading term is the $\mathcal O(\Dx^{-1})$ term.

\subsubsection{Multi-dimensional L\'{e}vy process $(M>1)$.}
\label{sec:2M}
Writing $J^\alpha[\phi]$ in polar coordinates and performing two
integrations by parts in the radial direction leads to
\begin{align}\label{J-redef}
	J^\alpha[\phi](t,x) &
	={\tilde J}^{\alpha}[\phi](t,x)-\tilde{b}^\alpha(t,x) D\phi,
\end{align}
where $\tilde{b}^\alpha(t,x)=\int_{|y|=1}\int_0^\infty
\partial_r^2\big[\eta^\alpha(t,x,ry)\big]\tilde{k}(r,y)dr\, dS_y$ and
\begin{align*}
	{\tilde J}^{\alpha}[\phi](t,x) & =\int_{|y|=1}\int_0^\infty \partial_r^2
	\big[\phi(t,x+\eta^\alpha(t,x,ry))\big]
	\tilde{k}(r,y)dr\,dS_y,\\
	\tilde{k}(s) & =\int_s^\infty\int_w^\infty r^{M-1}k(ry)dr\,dw.
\end{align*}
By (A.4') ($\gamma<2$), $\tilde k(r,y) 
\leq C r^{1-\gamma}e^{-(\Lambda+\eps)r}$ 
and thus $\tilde k$ is $r$-integrable uniformly in $y$. 
Note that $\tilde{b}^\alpha$ is bounded and $x$-Lipschitz, and that
$\tilde{b}^\alpha D\phi$ can be discretized using quadrature and
finite differencing as in Appendix \ref{sec:Lh}. 
This leads to a first order  monotone (upwind) approximation -- we skip the standard details. 

We propose the following approximation of $\tilde J^{\alpha}[\phi]$:
\begin{align}\label{appr2M}
	&{\tilde J}^{\alpha}_h[\phi](t,x)=\int_{|y|=1} 
	\sum_{n=0}^\infty\Delta_{rr,\Dz}
	\big[(i_h\phi)(t,x+\eta^\alpha(t,x,r_{n}y))\big]
	\tilde{k}_{h,n}(y) \,dS_y,
\end{align}
where $r_n=n\Dx$ (not $n\Dz$!), $\Delta_{rr,\Dz}$ is defined in \eqref{Delta}, the 
$x$-interpolation $i_h$ satisfies \eqref{eq:lint} and 
\eqref{int_error_bnd}, and
\begin{gather*}
	\tilde{k}_{h,n}(y)
	=\int_{r_n}^{r_{n+1}}\tilde{k}(r,y)dz.
\end{gather*}
The approximation \eqref{appr2M} follows from \eqref{J-redef} by
quadrature, finite differencing, and interpolation, and the derivation of
\eqref{J-redef} is rigorous provided the functions 
$\phi(t,x+\eta),D_z\phi(t,x+\eta),D_z^2\phi(t,x+\eta)$ and
$\eta,D_z\eta,D_z^2\eta$ are $z$-integrable 
and bounded by $e^{\Lambda|z|}$ at infinity.

The approximation is consistent and has the error expansion
\begin{align*}
	{\tilde J}^{\alpha}[\phi](t,x)= {\tilde J}_h^{\alpha}[\phi](t,x)
	+ E_Q+ E_{\mathrm{FDM}}+E_I,
\end{align*}
where $E_Q$, $E_{\mathrm{FDM}}$, $E_I$ have the same meaning 
as in Section \ref{sec:21}, and can be estimated as follows:
\begin{align*}
	&|E_Q|\le \Delta x \,|D_z^3\phi(\cdot+\eta^\alp)|_0M_{\tilde k},\\
	&|E_{\mathrm{FDM}}|\leq \frac1{24} \Dz^2\,
	|D_z^4\phi(\cdot+\eta^\alp)|_0M_{\tilde k}, \\
	&|E_I|\leq 4\frac{\Delta x^2}{\Dz^2}\, 
	|D_x^2\phi(\cdot+\eta^\alp)|_0 M_{\tilde k},
\end{align*}
where $M_{\tilde k}:=\int_{|y|=1}\int_0^\infty \tilde k(r,y) dr\, dS_y$. 
Whenever $\Dz=\Dx^{1/2}$, this is a first order
approximation. Moreover, the approximation is monotone satisfying
\eqref{monJ} and, whenever $\Dz=\Dx^{1/2}$, it also satisfies
\eqref{barJ}. This follows as in Section \ref{sec:11} since 
$\tilde J^{\alpha}_h[\phi](t,x_{\bar\beta})$ can be written as an 
integral over $\{|y|=1\}$ with integrand
\begin{align*}
	&\frac{1}{\Delta z^2}
	\big[(i_h\phi)(t,x_{\bar\beta}+\eta^\alpha(t,x_{\bar\beta},r_{-1}y))
	-\phi(t,x_{\bar\beta})\big]\tilde{k}_{h,0}(y)\\
	&\qquad +\frac{1}{\Delta z^2}\sum_{n=1}^{\infty}
	\big(\tilde{k}_{h,n+1}(y)-2\tilde{k}_{h,n}(y)+\tilde{k}_{h,n-1}(y)\big)
	\\ & \qquad \qquad \qquad\quad \times 
	\Big[(i_h\phi)(t,x_{\bar\beta}
	+\eta^{\alpha}(t,x_{\bar\beta},r_ny))
	-\phi(t,x_{\bar\beta})\big]\Big].
\end{align*}
Furthermore, for each fixed $y$, $\tilde k(r,y)$ is convex 
on $(0,\infty)$ and thus
$$
\tilde{k}_{h,n+1}(y)-2\tilde{k}_{h,n}(y)+\tilde{k}_{h,n-1}(y)\geq 0
\qquad\text{for $n\geq 1$ and $|y|=1$.}
$$

\begin{rem}$\quad$
\smallskip

a. (Order of schemes)
In general, our discretizations of the non-local term 
in \eqref{eq:integro_pde} are at most first order accurate. In the case
$\gamma\in[1,2)$, a first order rate is obtained by choosing $\Dz=\Dx^{1/2}$.
Higher order discretizations can be derived using higher order quadrature and
interpolation rules, but the resulting discretizations are
not monotone in general. On the other hand, if
$\eta\equiv z$, then interpolation is not
needed and consequently the monotone discretizations 
of Section \ref{g12} are 2nd order accurate.
\smallskip

b. (Remaining discretizations)
To obtain fully discrete schemes it remains to discretize the various terms
involving $\tilde b^\alp D\phi$, for example by quadrature and
finite differencing, cf.~Appendix \ref{sec:Lh}. 
In applications, the densities $\hat k$ and $\tilde k$ can often be explicitly
calculated, e.g., using incomplete gamma functions as in
\cite{AO07}. Otherwise these quantities also have to be computed by
quadrature. Furthermore, regarding Sections \ref{sec:1M} and \ref{sec:2M}, it
also remains to discretize the surface integral in $y$. This discretization
does not pose any problems, neither numerically nor in the analysis, as long as positive
quadratures are used. The details are left to the reader.
\smallskip

c. (Increasing efficiency)
From a practical point of view in terms computational 
efficiency, quadratures should be implemented using FFT. 
This is standard and we refer to, e.g., \cite{DFV05} for the details.
\smallskip

d. (Generalization I)
The above approximations (with obvious modifications) 
also apply to integral terms of the type
$$
J^\alpha[\phi](t,x)
=\sum_{i=1}^M \int_{\mathbb{R}\setminus\{0\}}
\big(\phi(t,x+\eta_{i}^\alpha(t,x,z))
-\phi(t,x)-\eta_{i}^\alpha(t,x,z)D\phi(t,x)\big)k_i(z)dz.
$$
Such terms appear in $M$-dimensional L\'{e}vy models based on $M$ 
independent Poisson random measures coming from one-dimensional L\'{e}vy processes. 
This is a rich class of models with many applications. 
For more information and analysis of such
models we refer to the book \cite{Oksendal:2005}.
\smallskip

e. (Generalization II)
With obvious modifications, our approximations also apply to
linear and non-linear equations involving the fractional Laplace
operator
$$
(-\Delta)^\alp u(x)=c_\alp\int_{|z|>0}
\frac{u(x+z)-u(x)-zDu(x)}{|z|^{N+2\alp}}dz, 
\qquad \alp\in(0,1),
$$
where $x,z\in\R^N$ and $c_\alp$ is a constant, in which 
case the L\'{e}vy measure takes the form $\nu(dz)=|z|^{-N-2\alp}dz$. 
This measure satisfies \ref{A3} except for the ``exponential 
decay at infinity''  requirement. It is straightforward to recast 
the entire theory to allow for a fractional Laplace setting 
where assumption (A.3) is replaced by
$$
\int_{|z|>0}|z|^2\wedge 1\,\nu(dz)<\infty.
$$
\end{rem}

\section{Error estimates for a switching system approximation}
\label{sec:sw}

In this section we obtain error estimates for a switching 
system approximation of \eqref{eq:integro_pde}--\eqref{eq:bdcond}.
This result, which has independent interest, plays a crucial role in the 
proof of Theorem \ref{err_est} in Section \ref{sec:PfErr}.

The switching system will be written as
\begin{align}\label{swi_system} 
	F_i(t,x,v,\partial_t v^i, Dv^i, D^2v^i,u^i(t, \cdot)) &= 0 
	\quad \text{in}\quad 
	Q_T, \quad i\in \{1,2,....., m\},\\
	\label{eq:BC2}
	v(0,x)&= (g(x), \dots, g(x)) 
	\quad \textrm{in}\quad\mathbb{R}^N,
\end{align}
where $v=(v_1,\dots,v_m)$ is in $\mathbb{R}^m$ and for sets $\mathcal{A}_i$
such that $\cup_i\mathcal{A}_i=\mathcal{A}$,
\begin{align*}
	&F_i(t,x,r,p_t, p_x, X, \phi(\cdot))\\
	&\quad =\max\big\{p_t+\sup_{\alpha\in\mathcal{A}_i}
	\big[\mathcal{L}^{\alpha}(t,x, r_i, p_x,X)
	-J^{\alpha}[\phi](t,x)\big];r_i-\mathcal{M}^i r\big\},\\
	&\mathcal{L}^{\alpha}(t,x,r,p,X):=-\textrm{tr}\big(a^\alpha(t,x)X\big)
	-b^\alpha(t,x)\cdot p+c^\alpha(t,x)r-f^\alpha(t,x),\\
	& \mathcal{M}^i r=\min_{j\neq i}\{r_j+k\}, \quad k>0,
\end{align*}
for $\big(t,x,r,p_t,p_x, X\big)\in\mathbb{R}\times
\mathbb{R}^N\times\mathbb{R}^m\times\mathbb{R}\times\mathbb{R}^N\times\mathbb{S}^N$ 
and any smooth bounded function $\phi$. 
The operator $J^\alpha[\phi]$ is defined below \eqref{eq:integro_pde}.

In the pure PDE case, such approximations have been 
studied in, e.g., \cite{Capuzzo:1984,Evans:1979,Barles:2006jf}. Here we
extend the error estimates of \cite{Barles:2006jf} to non-local Bellman
equations. In a complimentary article \cite{Biswas:2007td}, we
develop a viscosity solution 
theory covering switching systems like \eqref{swi_system}. We refer
to that paper for the precise definition of viscosity solutions and
proofs of the associated results utilized herein. 
If assumptions \ref{A1} -- \ref{A4} of Section \ref{sec:FD} hold, then we
have the following well posedness result \cite{Biswas:2007td}:

\begin{prop}\label{wellpos_2}
Assume that conditions \ref{A1}--\ref{A4} hold. 
There exists a unique viscosity solution $v$
of \eqref{swi_system}--\eqref{eq:BC2}, satisfying $|v|_1\le C$ 
for a constant $C$ depending only on $T$ and $K$ from \ref{A1}--\ref{A3}. 
Furthermore, if $w_1,w_2$ are respectively viscosity sub and 
supersolutions of \eqref{swi_system} satisfying 
$w_1(0,\cdot)\le w_2(0,\cdot)$, then $w_1\le w_2$.
\end{prop}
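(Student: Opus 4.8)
The plan is to prove the four assertions — comparison, uniqueness, the regularity bound $|v|_1\le C$, and existence — treating comparison as the engine that drives everything else. Let $w_1$ be a subsolution and $w_2$ a supersolution of \eqref{swi_system} with $w_1(0,\cdot)\le w_2(0,\cdot)$, and suppose for contradiction that $M:=\sup_i\sup_{Q_T}(w_1^i-w_2^i)>0$, after the standard reduction that penalizes in time (via a factor $e^{-\lambda t}$ or a linear-in-$t$ shift) to absorb the zeroth-order coefficient $c^\alpha$ and to force the maximum to be attained. The first mechanism to control is the switching coupling. At the maximizing index $i_0$ the subsolution inequality forces $w_1^{i_0}\le\mathcal{M}^{i_0}w_1$, i.e. $w_1^{i_0}\le w_1^j+k$ for all $j\ne i_0$, while the supersolution inequality only gives $\max\{\text{HJB-integro}; w_2^{i_0}-\mathcal{M}^{i_0}w_2\}\ge 0$. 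If $w_2$ sits in the switching region at $i_0$, then $w_2^{i_0}\ge w_2^j+k$ for some $j$, and combining with the subsolution bound yields $M\le w_1^j-w_2^j\le M$, so the maximum is also attained at $j$; since $w_2^{i_0}>w_2^j$ strictly along this chain (because $k>0$) and the index set $\{1,\dots,m\}$ is finite, iterating finitely many times reaches a \emph{good} index $i_*$ at which the supersolution is strictly out of the switching region, so its nonlocal HJB part is the active inequality.

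At the good index $i_*$ I would then run the standard doubling-of-variables argument for the nonlocal Bellman equation: double the spatial variable, penalize by $|x-y|^2/2\varepsilon$, invoke the nonlocal maximum principle (the Jensen--Ishii lemma adapted to IPDEs, as in the viscosity theory underlying Proposition \ref{wellpos_1}), and subtract the two viscosity inequalities. Here the integral operator $J^\alpha$ must be split into a singular near-field part on $\{|z|\le\delta\}$, controlled through the second-order jets produced by the lemma together with the bound $\int_{|z|\le1}|z|^2\,\nu(dz)\le K$ of \ref{A4}, and a bounded far-field part on $\{|z|>\delta\}$, controlled by the $L^\infty$ bounds and the exponential-decay estimate of \ref{A4}. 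The Lipschitz-in-$x$ dependence of $\eta^\alpha,b^\alpha,c^\alpha,f^\alpha$ from \ref{A2}--\ref{A3} then produces the structural estimate forcing $M\le 0$, the desired contradiction. I expect this nonlocal step — reconciling the singular measure with the $x$-dependence of the jump vector $\eta^\alpha$ across the doubled variables $x,y$ — to be the main obstacle, since it is precisely the $\eta^\alpha(t,x,z)-\eta^\alpha(t,y,z)$ discrepancy under the singular $\nu$ that must be absorbed into the penalization.

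Uniqueness is then immediate from comparison. For existence I would use Perron's method: construct a Lipschitz sub/supersolution pair serving as barriers from the bounds in \ref{A2} (for instance $\pm(|g|_0+Ct)$, adjusted so that both the obstacle constraint $r_i\le\mathcal{M}^i r$ and the HJB-integro inequality hold), take the supremum of all subsolutions lying between these barriers, and verify by the usual bump construction — carried out globally so that the test function sees the whole of $\R^N$ for the nonlocal term — that this envelope is a viscosity solution; the comparison principle guarantees it is the unique one.

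Finally, for $|v|_1\le C$ I would compare $v$ with its own translates and time-shifts. For the spatial Lipschitz estimate, $v(\cdot,\cdot+e)$ solves the switching system with coefficients perturbed by $O(|e|)$ (by the $x$-Lipschitz bounds in \ref{A2}--\ref{A3}) and initial data perturbed by $|g|_1|e|$, so comparison gives $|v(t,x)-v(t,x+e)|\le C|e|$ with $C=C(T,K)$. For the $\tfrac12$-Hölder estimate in time I would compare $v(t,\cdot)$ for $t\ge s$ with the barriers $v(s,\cdot)\pm\big(C_1\sqrt{t-s}+C_2(t-s)\big)$, where the spatial Lipschitz bound just obtained controls the first-order, second-order, and nonlocal contributions when these barriers are inserted into the equation, and the $\sqrt{t-s}$ term absorbs the leading contribution, yielding the exponent $\tfrac12$. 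Since every constant traces back to $T$ and $K$ in \ref{A1}--\ref{A3}, this gives the stated dependence; alternatively, the bound $|v|_1\le C$ can be read off directly from the representation of $v$ as the value function of a combined switching/continuous-control problem, as developed in \cite{Biswas:2007td}.
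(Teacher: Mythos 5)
Your program is sound, but be aware that the paper itself contains no proof of this proposition: Proposition \ref{wellpos_2} is imported verbatim from the companion paper \cite{Biswas:2007td}, and the surrounding text explicitly defers both the definition of viscosity solutions for systems like \eqref{swi_system} and all associated proofs to that reference. So the comparison is really between your sketch and what is done there — and in spirit they coincide. Your chain argument through the switching obstacle (at a maximum point either the HJB part of the supersolution is active, or $w_2^{i_0}\ge w_2^j+k$ passes the maximum to $j$ with a strict drop of $k$, so finiteness of $\{1,\dots,m\}$ yields a good index), the doubling of variables with near-field/far-field splitting of $J^\alpha$ controlled by \ref{A3}--\ref{A4}, Perron's method for existence, and regularity via translates plus the value-function representation are exactly the standard machinery the companion paper implements; you also correctly isolate the genuinely delicate point, namely absorbing $\eta^\alpha(t,x,z)-\eta^\alpha(t,y,z)$ under the singular measure into the quadratic penalization, which is what the structural bound $|e^{-\Lambda|z|}\eta^\alpha(\cdot,\cdot,z)|_1\le K(|z|\wedge 1)$ is designed for.

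Two steps need repair as literally written. First, the time-regularity barrier $v(s,\cdot)\pm\big(C_1\sqrt{t-s}+C_2(t-s)\big)$ cannot be inserted into the equation: $v(s,\cdot)$ is only Lipschitz, and a Lipschitz bound controls neither $D^2$ nor the singular part of $J^\alpha$. The standard fix (Krylov) is to mollify at scale $\delta$ and use the barriers $v(s,\cdot)\star\rho_\delta\pm\big(C\delta+C\delta^{-1}(t-s)\big)$, where $|D^2(v(s,\cdot)\star\rho_\delta)|_0\le C/\delta$ bounds the second-order and nonlocal contributions; optimizing $\delta=\sqrt{t-s}$ is what produces the exponent $\tfrac12$, which your sketch asserts but does not actually generate. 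Second, the spatial Lipschitz step is not "comparison" alone: $v(\cdot,\cdot+e)$ solves a \emph{different} system (shifted coefficients), so you need either a continuous dependence estimate for such systems — which is precisely one of the results proved in \cite{Biswas:2007td} — or the equivalent device of showing that $v(\cdot,\cdot+e)+Ce^{\lambda t}|e|$ is a supersolution of the original system. Both fixes are routine, and your fallback of reading $|v|_1\le C$ off the stochastic representation of $v$ as the value function of a combined switching/control problem is legitimate and is indeed how the companion paper closes that part of the argument.
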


Before we continue, we need the following remark.

\begin{rem}\label{ext_rmk}
The functions $\sigma^\alpha, b^\alpha, c^\alpha, f^\alpha, \eta^\alpha$ 
are only defined for times $t\in[0,T]$. 
But they can be easily extended to times 
$[-r, T + r]$ for any $r>0$ in such a way that
\ref{A1} -- \ref{A3} still hold. In view of Proposition \ref{wellpos_2}
we can then solve the initial value problem up to time $T + r$ or,
by using a translation in time, we may start from time $-r$. We will
use these facts several times below.
\end{rem}

By equi-continuity and the Arzela-Ascoli theorem it easily follows that
the each component of the solution of
\eqref{swi_system}--\eqref{eq:BC2} converges locally
uniformly to the solution of
\eqref{eq:integro_pde}--\eqref{eq:bdcond} as $k\rightarrow 0$.
To derive an error estimate we use Krylov's method of shaking the coefficients coupled with an
idea of P.-L. Lions as in \cite{Barles:2006jf}. 
We need the following auxiliary system
\begin{align}\label{swi_system_auxi} 
	F_i^\epsilon(t,x,v^\epsilon,\partial_t v_i^\epsilon, 
	Dv_i^\epsilon, D^2v_i^\epsilon,v_i^\epsilon(t, \cdot)) &= 0 \quad \text{in}
	\quad Q_{T+\epsilon^2},\quad i\in \{1,\dots, m\},\\ \nonumber
	v^\epsilon(0,x)&=(g(x),\dots,g(x)) 
	\quad \textrm{in}\quad\mathbb{R}^N,
\end{align}
where $v^\epsilon = (v_1^\epsilon, \dots, v_m^\epsilon)$ and
\begin{align*}
	F_i^\epsilon(t,x,r,p_t, p_x, X, \phi(\cdot))
	=\max\big\{&p_t+\sup_{\alpha\in\mathcal{A}_i;|e|\le \epsilon;0\le s\le \epsilon^2}
	\big(\mathcal{L}^{\alpha}(t+s,x+e, r_i, p_x, X)\\&
	\quad -J^{\alpha}(t+s,x+e)\phi\big); r_i-\mathcal{M}^i r\big\}.
\end{align*}
The operators $\mathcal{L}$, $J$, and $\mathcal{M}$ are as previously defined. 

Note that we have used the extension of the data
mentioned in Remark \ref{ext_rmk}. By regularity and
continuous dependence results from \cite{Biswas:2007td} we have
\begin{prop}\label{aux_wellpos}
Assume that \ref{A1}--\ref{A4} hold. There exists a unique
viscosity solution $v^\epsilon: \bar{Q}_{T+\epsilon^2}\rightarrow\mathbb{R}$ 
of \eqref{swi_system_auxi} satisfying
\begin{align*}
	|v^\epsilon|_1+\frac{1}{\epsilon}|v^\epsilon-v|_0 
	\le C,
\end{align*} 
where $C$ depends $T$ and $K$. Furthermore, if $w_1$
and $w_2$ are respectively sub and supersolutions of \eqref{swi_system_auxi}
satisfying $w_1(0,\cdot)\le w_2(0,\cdot)$, then $w_1\le w_2$.
\end{prop}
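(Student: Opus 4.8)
The plan is to recognize the auxiliary problem \eqref{swi_system_auxi} as a switching system of \emph{exactly} the form \eqref{swi_system}, but posed with an enlarged control set and ``shaken'' coefficients, so that Proposition \ref{wellpos_2} applies almost verbatim. Concretely, for each $i$ introduce the new control set $\tilde{\mathcal{A}}_i := \mathcal{A}_i \times \overline{B}_\epsilon \times [0,\epsilon^2]$, where $\overline{B}_\epsilon=\{e:|e|\le\epsilon\}$, with generic element $\tilde\alpha=(\alpha,e,s)$, and define shifted coefficients $\tilde\sigma^{\tilde\alpha}(t,x):=\sigma^\alpha(t+s,x+e)$, and likewise for $b,c,f,\eta$. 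With this relabeling, $F_i^\epsilon$ is precisely an operator of the type appearing in \eqref{swi_system} over the control sets $\tilde{\mathcal{A}}_i$. First I would verify that assumptions \ref{A1}--\ref{A4} continue to hold for this enlarged system with the \emph{same} constant $K$. This is where the extension from Remark \ref{ext_rmk} enters: it guarantees the coefficients remain defined on a time interval containing $[0,T+\epsilon^2]$ after the shift. The set $\tilde{\mathcal{A}}_i$ is again a separable metric space (a finite product of such), continuity in $\tilde\alpha$ is inherited from continuity in $\alpha$ together with joint continuity in $(t,x)$, and --- crucially --- the seminorms $|\tilde\sigma^{\tilde\alpha}|_1$, $|\tilde b^{\tilde\alpha}|_1$, etc.\ are translation invariant, so the bounds in \ref{A2}--\ref{A3} are preserved. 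The L\'{e}vy measure $\nu$ is untouched, so \ref{A4} holds unchanged.

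Granting this, Proposition \ref{wellpos_2} applied to the enlarged system on $Q_{T+\epsilon^2}$ immediately yields existence and uniqueness of a viscosity solution $v^\epsilon$, the regularity bound $|v^\epsilon|_1\le C$ with $C=C(T,K)$, and the stated comparison principle for sub- and supersolutions. This disposes of every assertion except the continuous-dependence estimate $|v^\epsilon-v|_0\le C\epsilon$.

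For that remaining bound, the idea is to read $v$ and $v^\epsilon$ as solutions of two switching systems whose \emph{data differ by} $O(\epsilon)$, and then invoke the continuous-dependence result of \cite{Biswas:2007td}. Using \ref{A2}--\ref{A3} and the parabolic scaling built into $|\cdot|_1$, for any $|e|\le\epsilon$ and $0\le s\le\epsilon^2$ one has
\begin{align*}
	|\sigma^\alpha(t+s,x+e)-\sigma^\alpha(t,x)|
	+|b^\alpha(t+s,x+e)-b^\alpha(t,x)|
	\le K\big(s^{1/2}+|e|\big)\le 2K\epsilon,
\end{align*}
and analogously for $c^\alpha,f^\alpha$ and for the jump coefficient $\eta^\alpha(t+s,x+e,z)$, where the exponential moment bound in \ref{A4} controls the resulting perturbation of the non-local operator uniformly in $\alpha$. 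Feeding this $O(\epsilon)$ perturbation of the coefficients into the continuous-dependence estimate for switching systems from \cite{Biswas:2007td} produces $|v^\epsilon-v|_0\le C\epsilon$, with $C$ depending only on $T$ and $K$.

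The main obstacle is this last step, namely the continuous-dependence estimate itself. Its proof requires the full viscosity-solution comparison machinery for these fully non-linear, non-local switching systems --- in particular a doubling-of-variables argument that simultaneously handles the switching coupling $r_i-\mathcal{M}^i r$, the degenerate second-order term, and the singular integral operator $J^\alpha$. Controlling the non-local contribution when $\eta^\alpha$ is evaluated at the shaken point $(t+s,x+e)$, while keeping the dependence on the data genuinely linear in $\epsilon$, is the delicate point; this is precisely the analysis carried out in \cite{Biswas:2007td}, which we take as given here.
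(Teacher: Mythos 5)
Your proposal is correct and takes essentially the same route as the paper: the paper gives no standalone proof, stating the proposition as a direct consequence of the regularity and continuous-dependence results of \cite{Biswas:2007td}, applied exactly as you do---after viewing the shaken system as a switching system of the form \eqref{swi_system} over enlarged control sets, with the time-extension of the data from Remark \ref{ext_rmk}. Your explicit checks that \ref{A1}--\ref{A4} survive the shaking with the same constant $K$ (translation invariance of the $|\cdot|_1$ seminorms) and that the coefficient perturbation is $O(\epsilon)$ by parabolic scaling are precisely the bookkeeping the paper leaves implicit in that citation.
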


We are now in a position to prove the following main result of this section:

\begin{thm}\label{thm:switch_rate}
Assume that \ref{A1}--\ref{A4} hold. 
If $u$ and $v$ are respectively viscosity solutions of 
\eqref{eq:integro_pde}--\eqref{eq:bdcond} and 
\eqref{swi_system}--\eqref{eq:BC2}, then for sufficiently small $k$,
\begin{align*}
	0\le v_i-u\le Ck^{\frac{1}{3}}, 
	\qquad i\in\{1,\dots,m\},
\end{align*}
where $C$ depends only on $K$ and $T$.
\end{thm}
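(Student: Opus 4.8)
The plan is to establish the two inequalities separately: the lower bound follows at once from comparison for the switching system, and the upper bound from Krylov's method of shaking the coefficients applied to \eqref{swi_system}.

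For the lower bound $0\le v_i-u$, I would verify that the constant vector $(u,\dots,u)$, built from the scalar solution $u$, is a viscosity subsolution of \eqref{swi_system}. Inserting $r=(u,\dots,u)$ and the jets of $u$ into $F_i$, the switching term is $u-\mathcal{M}^i(u,\dots,u)=u-(u+k)=-k\le 0$, while the differential--nonlocal term equals $u_t+\sup_{\alpha\in\mathcal{A}_i}\{-L^\alpha[u]+c^\alpha u-f^\alpha-J^\alpha[u]\}\le u_t+F[u]=0$, using only $\mathcal{A}_i\subseteq\mathcal{A}$. Hence $F_i[(u,\dots,u)]\le 0$ for every $i$, and since the initial data coincide with those of $v$, the comparison principle of Proposition \ref{wellpos_2} gives $u\le v_i$.

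For the upper bound I would regularize $v$ through the shaken auxiliary system \eqref{swi_system_auxi}. Proposition \ref{aux_wellpos} supplies its solution $v^\epsilon$ with $|v^\epsilon|_1\le C$ and $|v^\epsilon-v|_0\le C\epsilon$. Mollifying each component, set $\psi_i:=v_i^\epsilon*\rho_\epsilon$; then $|\psi-v|_0\le C\epsilon$ and $|\partial_t^{\beta_0}D^{\beta'}\psi_i|\le C\epsilon^{1-2\beta_0-|\beta'|}$. Since each map $F_i$ is convex in its arguments (a supremum of affine terms, maximized with the convex function $r_i-\mathcal{M}^i r$), and since the shaking ranges exactly over the support $\{0\le s\le\epsilon^2,\ |e|\le\epsilon\}$ of $\rho_\epsilon$, every frozen shift $v^\epsilon(\cdot-s,\cdot-e)$ is a subsolution of the unshaken system \eqref{swi_system}, and averaging against $\rho_\epsilon$ preserves this by Jensen's inequality. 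Thus $\psi$ is a smooth subsolution of \eqref{swi_system}: pointwise $\partial_t\psi_i+\sup_{\alpha\in\mathcal{A}_i}\{\mathcal{L}^\alpha(\psi_i)-J^\alpha[\psi_i]\}\le 0$ and $\psi_i\le\min_{j\ne i}(\psi_j+k)$, the latter forcing $|\psi_i-\psi_j|_0\le k$.

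The decisive and hardest step is to convert these component inequalities, each controlling only the restricted supremum over $\mathcal{A}_i$, into a single viscosity subsolution inequality for the full Hamiltonian over $\mathcal{A}$, and then to compare with $u$ via Proposition \ref{wellpos_1}. I would test $w:=\max_i\psi_i$ from above by a smooth $\phi$: at a contact point the maximum is attained at some $i_0$, so $\phi$ touches $\psi_{i_0}$ there and the controls in $\mathcal{A}_{i_0}$ are handled directly by $\psi_{i_0}$'s inequality, the nonlocal term carrying the correct sign because $w\ge\psi_{i_0}$ with equality at the point. For a control in $\mathcal{A}_j$, $j\ne i_0$, one must pass to $\psi_j$'s pointwise inequality; since $|\psi_i-\psi_j|_0\le k$, the Landau--Kolmogorov interpolation $|D(\psi_i-\psi_j)|_0\le C(k\,|D^2\psi|_0)^{1/2}\le Ck^{1/2}\epsilon^{-1/2}$ controls the drift discrepancy, while the genuinely delicate second-order and nonlocal discrepancies are estimated using the ellipticity $a^\alpha\ge0$ (so that $D^2\phi\ge D^2\psi_{i_0}$ enters with a favourable sign), the higher interpolation bounds on the $\psi_i$, and $\int_{0<|z|\le1}|z|^2\nu(dz)<\infty$. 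This shows $w$ to be a scalar subsolution up to an error $E(\epsilon,k)$; comparing with $u$ and using $|w-v_i|_0\le C\epsilon$ together with the $C\epsilon$-closeness of the initial data yields $v_i\le u+C(\epsilon+E(\epsilon,k))$. Balancing the $O(\epsilon)$ regularization error against the $O(k^{1/2}\epsilon^{-1/2})$ transfer error and minimizing at $\epsilon\sim k^{1/3}$ produces the stated rate $k^{1/3}$, exactly as in the local case of \cite{Barles:2006jf}; the main obstacle throughout is keeping the cross-component second-order term from degrading this exponent.
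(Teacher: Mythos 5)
Your lower bound and the overall architecture of your upper bound (shaken auxiliary system, mollification, transfer to the scalar Bellman equation, comparison, balancing in $\epsilon$) coincide with the paper's proof. However, the step you yourself call ``the decisive and hardest step'' --- controlling, for controls in $\mathcal{A}_j$ with $j\neq i_0$, the cross-component discrepancy in the second-order and nonlocal terms --- is left unresolved, and the interpolation route you sketch cannot deliver it. Landau--Kolmogorov against the absolute bound $|D^2\psi|_0\le C\epsilon^{-1}$ gives your estimate $|D(\psi_i-\psi_j)|_0\le Ck^{1/2}\epsilon^{-1/2}$, but the analogous estimate one level up,
\begin{align*}
|D^2(\psi_i-\psi_j)|_0\le C\big(|\psi_i-\psi_j|_0\,|D^4\psi|_0\big)^{1/2}\le Ck^{1/2}\epsilon^{-3/2},
\end{align*}
is $O(1)$ at $\epsilon\sim k^{1/3}$: it does not merely degrade the exponent, it destroys it. Since both $-\mathrm{tr}\,(a^\alpha D^2\cdot)$ and $J^\alpha$ see exactly this second-order discrepancy (the favourable sign $D^2\phi\ge D^2\psi_{i_0}$ at the contact point only compares $\phi$ with $\psi_{i_0}$, not $\psi_{i_0}$ with $\psi_j$), your error accounting $O(k^{1/2}\epsilon^{-1/2})$ is incomplete and the proof as written does not close.

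The missing ingredient is elementary and is precisely what the paper uses: apply the mollifier bounds to the \emph{difference} of components rather than interpolating against absolute derivative bounds. Since $v^\epsilon$ is a subsolution of the shaken switching system, $|v_i^\epsilon-v_j^\epsilon|_0\le k$, and $\psi_i-\psi_j=(v_i^\epsilon-v_j^\epsilon)\star\rho_\epsilon$, whence
\begin{align*}
|\partial_t(\psi_i-\psi_j)|_0\le Ck\epsilon^{-2}
\qquad\text{and}\qquad
|D^n(\psi_i-\psi_j)|_0\le Ck\epsilon^{-n}.
\end{align*}
With these bounds every discrepancy in the transfer step --- time derivative, drift, second order, zeroth order, and the nonlocal term (split at $|z|=1$ and estimated using \ref{A3}--\ref{A4}) --- is $O(k\epsilon^{-2})$. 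In fact, once this is in hand, each single smooth component $\psi_i$ is a \emph{classical} subsolution of the full scalar Bellman equation up to an error $Ck\epsilon^{-2}$, so your construction $w=\max_i\psi_i$ and the accompanying viscosity test-function argument are unnecessary detours: the paper simply compares each $\psi_i$ with $u$ directly via the scalar comparison principle (on the time-shifted cylinder $(\epsilon^2,T)\times\R^N$, using time regularity at $t=\epsilon^2$). Balancing $C(\epsilon+k\epsilon^{-2})$ at $\epsilon= k^{1/3}$ then gives the stated rate.
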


\begin{proof}
Since $w=(u,\dots,u)$ is a viscosity subsolution of \eqref{swi_system}, the
first inequality $u\le v_i$ follows from the comparison principle.

The second inequality will be obtained in the following. Since 
$v^\epsilon$ is the viscosity solution of
\eqref{swi_system_auxi}, it follows that
\begin{align*}
	\partial_t v_i^\epsilon +\sup_{\alpha\in\mathcal{A}_i}
	\big(\mathcal{L}^\alpha(t+s,x+e,v_i^\epsilon(t,x),Dv_i^\epsilon, D^2 v_i^\epsilon)
	-J^\alpha(t+s,x+e) v_i^\epsilon\big)\le 0
\end{align*}
in $Q_{T+\epsilon^2}$ in the viscosity sense, $i=1,\dots,m$. 
After a change of variable, we conclude that
for every $0\le s\le \epsilon^2$ and $|e|\le \epsilon $,
$v^\epsilon(t-s, x-e)$ is a viscosity subsolution of the uncoupled system
\begin{align}\label{uncoupled}
	\partial_t w_i^\epsilon
	+\sup_{\alpha\in\mathcal{A}_i}\big(\mathcal{L}^\alpha(t, x,w_i^\epsilon, 
	Dw_i^\epsilon, D^2 w_i^\epsilon)
	-J^\alpha(t, x) w_i^\epsilon\big)
	=0\quad\textrm{in}\quad Q^\epsilon_{T},
\end{align}
where $Q^\epsilon_{T}:=(\epsilon^2,T)\times\R^N$.
Now set $v_\epsilon := v^\epsilon\star\rho_\epsilon$, 
where $\rho_\epsilon$ is the mollifier defined in \eqref{eq:mollifier}. 
A Riemann-sum approximation shows that this function is the 
limit of convex combinations of viscosity subsolutions 
$v(t-s,x-e)$ of the convex system \eqref{uncoupled}. Hence
$v_\epsilon$ is also a viscosity subsolution of \eqref{uncoupled} (see
the appendix of \cite{Jakobsen:2005tp} for more details). 
On the other hand, since $v^\epsilon $ is a 
continuous subsolution of \eqref{swi_system_auxi},
\begin{align*}
	v_i^\epsilon \le \min_{j\neq i} v_j^\epsilon 
	+k \quad \textrm{in}~Q_{T+\epsilon^2},~i\in\{1,\dots,m\}.
\end{align*} 
It follows that $\max_i v_i^\epsilon(t,x) -\min v_j^\epsilon(t,x)\le k$ 
in $Q_{T+\epsilon^2}$, and therefore
\begin{align*}
	|v_i^\epsilon-v_j^\epsilon|_0\le k,
	\qquad i,j\in\{1,\dots,m\}.
\end{align*}
Then, by the definition and properties of $v_\epsilon$,
\begin{align*}
	|\partial_t v_{\epsilon i}-\partial_tv_{\epsilon j}|_0
	\le C \frac{k}{\epsilon^2}
	\qquad\text{and}\qquad
	|D^nv_{\epsilon i}-D^n v_{\epsilon j}|_0
	\le C\frac{k}{\epsilon^n},
\end{align*}
for $n\in\mathbb{N},~i,j\in\{1,\dots,m\}$, where $C$ depends only on
$\rho$, $T$, and $K$. For $\epsilon<1$, it follows that
\begin{align*}
	&|\partial_t v_{\epsilon j}
	+\sup_{\alpha\in\mathcal{A}_i}
	\big(\mathcal{L}^\alpha(t, x,v_{\epsilon j}(t,x), Dv_{\epsilon j}, D^2 v_{\epsilon j})
	-J^\alpha(t, x) v_{\epsilon j}\big)\\
	&\quad -\partial_t v_{\epsilon i}
	-\sup_{\alpha\in\mathcal{A}_i}
	\big(\mathcal{L}^\alpha(t, x,v_{\epsilon i}(t,x), Dv_{\epsilon i}, D^2 v_{\epsilon i})
	-J^\alpha(t, x) v_{\epsilon i}\big)|\le C
\frac{k}{\epsilon^2}
\end{align*}
and, since $v_\epsilon$ is subsolution of \eqref{uncoupled},
\begin{align*}
	\partial_t v_{\epsilon i}
	+\sup_{\alpha\in{\mathcal{A}}}\big(\mathcal{L}^\alpha
	(t,x, v_{\epsilon i}(t,x), Dv_{\epsilon i}, D^2 v_{\epsilon i})
	-J^\alpha(t, x) v_{\epsilon i}\big)
	\le C\frac{k}{\epsilon^2}\quad \textrm{in}~Q_T^\epsilon,
\end{align*}
where the constant $C$ depends on $\rho$, $T$, and $K$. From this
inequality it is easy to see that $v_{\epsilon i}-te^{Kt}C\frac{k}{\epsilon^2}$ is a 
subsolution of \eqref{eq:integro_pde} restricted to $Q_T^\epsilon$. Hence, by the comparison
principle,
\begin{align*}
	v_{\epsilon i}-u\le e^{Kt}
	\Big(|v_{\epsilon i}(\epsilon^2,\cdot)-u(\epsilon^2,\cdot)|_0
	+Ct\frac{k}{\epsilon^2}\Big)\quad
	\text{in $Q_T^\epsilon$, $i\in \{1,\dots,m\}$.}
\end{align*}
By regularity in time, $|u(t,\cdot)-v_i(t,\cdot)|_0 \le
(|u|_1+|v_i|_1)\epsilon$, and by Proposition \ref{aux_wellpos} and
properties of mollification we conclude that
\begin{align*}
	v_i-u\le v_i-v_{\epsilon i} +v_{\epsilon i}-u\le 
	C(\epsilon+\frac{k}{\epsilon^2}) 
	\quad\text{in $Q_T$, $i\in\{1,\dots,m\}$.}
\end{align*}
Now the theorem follows by minimizing with respect to $\epsilon$.
\end{proof}

\section{The Proof of Theorem \ref{err_est}}

\label{sec:PfErr}
To prove Theorem \ref{err_est} we will use different arguments for the 
upper and lower bounds. The upper bound, part (a), is the ``easy'' part, and it 
is essentially a reformulation of the general
upper bound established in \cite{Jakobsen:2005tp}. We skip the
details, and prove only part (b) which is a new result.

\begin{proof}[Proof of Theorem \ref{err_est} (b)]
Without loss of generality we will assume that $\mathcal{A}$ is
finite:
$$
\mathcal{A} = \{\alpha_1,\alpha_2,\dots,\alpha_m\}.
$$
The proof of this statement is similar to the one given in
\cite{Barles:2006jf} in the pure PDE case and relies on assumption \ref{A1}. 
Now we follow \cite{Barles:2006jf} and use a switching system
approximation to construct approximate
supersolutions of \eqref{eq:integro_pde} which are point-wise
minima of smooth functions and approximates the viscosity solution of
\eqref{eq:integro_pde}--\eqref{eq:bdcond}. Consider
\begin{align}\label{aux_switch} 
	F_i^\epsilon\big(t,x, v^\epsilon,\partial_t v_i^\epsilon,D v_i^\epsilon,
	D^2v_i^\epsilon, v_i^\epsilon(t,\cdot) \big) & = 0 
	\quad \textrm{in}~Q_{T+2\epsilon^2}\\
	\nonumber v^\epsilon(0,x) &= v_0(x)
	\quad\text{in}\quad \mathbb{R}^N,
\end{align} 
where $v^\epsilon = (v_1^\epsilon,\dots,v_m^\epsilon)$, $v_0=(g,\dots, g) $, and
\begin{align*}
	&F_i^\epsilon\big(t,x, r, p_t,p_x,X,\phi(t,\cdot) \big)\\
	&=\max\big\{ p_t+\min_{0\le s\le\epsilon^2, |e|
	\le \epsilon}\big(\mathcal{L}^{\alpha_i}
	\big(t+s-\epsilon^2,x+e, r, p_t,p_x, X\big)\\
	&\qquad\qquad\quad
	-J^{\alpha_i}(t+s-\epsilon^2, x+e)\phi\big); 
	r_i-\mathcal{M}^i r\big\},
\end{align*} 
where $\mathcal{L},J$ and $\mathcal{M}$ are
defined below \eqref{swi_system} in Section \ref{sec:sw}. 
This new problem is well-posed and each component 
of the solution of this switching system will converge to the viscosity 
solution of \eqref{eq:integro_pde} as $k,\epsilon\to 0$:

\begin{lem}\label{aux_switch_well}
Assume that conditions \ref{A1}--\ref{A4} hold. 
There exists a unique solution 
$v^\epsilon$ of \eqref{aux_switch} satisfying
\begin{align*}
	|v^\epsilon|_1\le \bar{K},\quad 
	\max_{i,j}|v_i^\epsilon-v_j^\epsilon|_0\le k,
	\quad \text{and for $k$ small,
	$\max_{i}|u-v_i^\epsilon|\le 
	C(\epsilon+ k^{\frac 13})$,}
\end{align*}
where $\bar K,C$ only depend on $T$ and $K$ from \ref{A2}--\ref{A4}.
\end{lem}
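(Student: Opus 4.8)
The plan is to settle the three assertions in turn, leaning on the switching-system theory of \cite{Biswas:2007td} together with the two rate results already established in this section, namely Theorem \ref{thm:switch_rate} and the continuous-dependence estimate underlying Proposition \ref{aux_wellpos}. For the first assertion I would observe that \eqref{aux_switch} is again a switching system of the class analyzed in \cite{Biswas:2007td}: taking the minimum over the compact window $\{0\le s\le\epsilon^2,\ |e|\le\epsilon\}$ of the operators $\mathcal{L}^{\alpha_i}(t+s-\epsilon^2,x+e,\cdots)-J^{\alpha_i}(t+s-\epsilon^2,x+e)[\,\cdot\,]$ preserves degenerate ellipticity and the structural bounds, and, after extending the data to the enlarged horizon as in Remark \ref{ext_rmk}, the shifted coefficients still satisfy \ref{A1}--\ref{A4} with the same constant $K$. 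Existence, uniqueness, and the bound $|v^\epsilon|_1\le\bar K$ with $\bar K$ independent of $\epsilon$ and $k$ then follow from the well-posedness and Lipschitz-regularity results of \cite{Biswas:2007td} (the analogue of Proposition \ref{wellpos_2}); uniformity holds because the shift window has diameter $O(\epsilon)$ and never enters the structural constants.

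For the switching bound I would use only the obstacle part of the system. Since $v^\epsilon$ solves $F_i^\epsilon=0$ and $F_i^\epsilon$ is a maximum, the zero-order constraint $v_i^\epsilon-\mathcal{M}^iv^\epsilon\le 0$ holds pointwise everywhere, which is the standard statement for switching systems in \cite{Biswas:2007td}. Unfolding $\mathcal{M}^iv^\epsilon=\min_{j\ne i}\{v_j^\epsilon+k\}$ gives $v_i^\epsilon\le v_j^\epsilon+k$ for all $j\ne i$, and interchanging $i$ and $j$ yields $\max_{i,j}|v_i^\epsilon-v_j^\epsilon|_0\le k$.

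The rate is the substantive claim. Let $v$ be the solution of the \emph{unshaken} system \eqref{swi_system} for the singleton choice $\mathcal{A}_i=\{\alpha_i\}$ (so that $\cup_i\mathcal{A}_i=\mathcal{A}$); this is precisely \eqref{aux_switch} with the shaking turned off. Theorem \ref{thm:switch_rate} gives $|u-v_i|_0\le Ck^{\frac13}$, so by the triangle inequality $|u-v_i^\epsilon|_0\le|u-v_i|_0+|v_i-v_i^\epsilon|_0$ it remains only to pay for the shaking, that is, to prove $|v^\epsilon-v|_0\le C\epsilon$. For this I would compare the two operators pointwise: using $|v^\epsilon|_1\le\bar K$ to bound $Dv_i^\epsilon,D^2v_i^\epsilon$ in the local and nonlocal terms, the minimum over the shift window differs from the unshifted operator by the modulus of continuity of the coefficients across that window; by \ref{A2}--\ref{A3} the coefficients are Lipschitz in $x$ and $\tfrac12$-Hölder in $t$, so the spatial shift $|e|\le\epsilon$ and the temporal shift $|s-\epsilon^2|\le\epsilon^2$ each contribute at most $O(\epsilon)$, the latter because $(\epsilon^2)^{1/2}=\epsilon$. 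Feeding this $O(\epsilon)$ discrepancy into the comparison and continuous-dependence machinery of \cite{Biswas:2007td} (the counterpart for \eqref{aux_switch} of the bound $\tfrac1\epsilon|v^\epsilon-v|_0\le C$ in Proposition \ref{aux_wellpos}) gives $|v^\epsilon-v|_0\le C\epsilon$, once the $\epsilon^2$ shift of the time origin and the horizon $Q_{T+2\epsilon^2}$ are absorbed via Remark \ref{ext_rmk} and the time-regularity in $|v|_1,|v^\epsilon|_1\le\bar K$. Combining the two estimates yields $|u-v_i^\epsilon|_0\le C(\epsilon+k^{\frac13})$ for small $k$.

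The main obstacle is exactly this continuous-dependence step: one must verify that the parabolic scaling of the shaking ($\epsilon^2$ in time against $\epsilon$ in space) keeps both contributions at $O(\epsilon)$ under the $\tfrac12$-Hölder time regularity, and that the $\min$ over shifts, which renders each component concave in the shift variable, still lies within the reach of the comparison results of \cite{Biswas:2007td}; everything else is bookkeeping. I note in passing that this $\min$-shaking is precisely what will make $v^\epsilon\star\rho_\epsilon$ an approximate \emph{supersolution} of \eqref{eq:integro_pde} in Section \ref{sec:PfErr}, although that property is not needed for the present lemma.
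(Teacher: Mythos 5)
Your proposal is correct and follows essentially the same route as the paper: well-posedness, the uniform bound $|v^\epsilon|_1\le\bar K$, and the continuous-dependence estimate $|v^\epsilon-v^0|_0\le C\epsilon$ (with $v^0$ the unshaken system, i.e.\ \eqref{swi_system} with singleton $\mathcal{A}_i=\{\alpha_i\}$) are quoted from \cite{Biswas:2007td}, the obstacle part of the equation yields $\max_{i,j}|v_i^\epsilon-v_j^\epsilon|_0\le k$, and the rate follows from Theorem \ref{thm:switch_rate} plus the triangle inequality. The only caveat is that your heuristic for the continuous-dependence step (pointwise bounds on $Dv_i^\epsilon$, $D^2v_i^\epsilon$) is not rigorous for merely Lipschitz viscosity solutions, but since you ultimately defer that step to the machinery of \cite{Biswas:2007td} -- exactly as the paper does -- this is immaterial.
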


\begin{proof}
From \cite{Biswas:2007td} we have the existence and uniqueness of a viscosity solution, 
and moreover the uniform bounds
\begin{align*}
	|v^\epsilon|_1\le \bar{K}\quad\text{and}\quad
	|v^\epsilon-v^0|_0\le C\epsilon,
\end{align*}
where $v^0$ is the unique viscosity solution of \eqref{aux_switch}
corresponding to $\epsilon = 0$. The last inequality in the lemma now
follows since $|u-v_i^0|_0\le C k^\frac 13$ by Theorem \ref{thm:switch_rate}.
To second inequality follows since arguing as in the proof of Theorem
\ref{thm:switch_rate} leads to $0\le \max_i v_i^\epsilon-\min_j v_j^\epsilon 
\le k\quad \text{in} \quad Q_{T+2\epsilon^2}$.
\end{proof}

Next we time-shift and mollify $v^\epsilon$. For $i=1,\dots,m$, set
\begin{align*}
	\bar{v}_i^\epsilon(t,x):= v_i^\epsilon(t+\epsilon^2,x),
	\qquad 
	v_{\epsilon i}(t,x):= \rho_\epsilon \star \bar{v}_i^\epsilon(t,x),
\end{align*}
where $\rho$ is defined in \eqref{eq:mollifier}. 
Note that $\mathrm{supp} (\rho_\eps)\subset (0,\eps^2)\times B(0,\eps)$ 
and that the functions
$v^\epsilon$, $\bar{v}^\epsilon$, $v_\epsilon$ are
well-defined respectively on $Q_{T+2\eps^2}$, 
$(-\epsilon^2, T+\epsilon^2]\times \mathbb{R}^N $, $Q_{T+\epsilon^2}$. 
By Lemma \ref{aux_switch_well} and properties of mollifiers,
\begin{align}
	\nonumber
	&|\bar{v}^\epsilon|_1\le \bar{K},\quad
	|\bar{v}^\epsilon-v^\epsilon|_0\le
	\bar{K}\epsilon,\\
	\label{eq:aux_well_2}
	&\max_{i,j}|v_{\epsilon i}-v_{\epsilon j}| 
	\le C(k+\epsilon)\quad\text{in}\quad Q_{T+\epsilon^2},\\
	&\nonumber \max_{i}|u-v_{\epsilon, i}|\le 
	C(\epsilon + k^{\frac 13})\quad \text{in}\quad Q_T,
\end{align} 
where $C$ depends only on $\rho$ and $K,T$ from \ref{A2}--\ref{A4}. 
A supersolution of \eqref{eq:integro_pde} can now be produced by setting
$$
w:= \min_{i} v_{\epsilon i}.
$$

\begin{lem}\label{ap_smooth_sup} 
Assume that conditions \ref{A1}---\ref{A4} hold and 
$\epsilon \le (8 \sup_i[v_i^\epsilon]_1)^{-1} k$. For every
$(t,x)\in Q_T$, if $j := {\argmin}_{i}v_{\epsilon i}(t,x)$,
\begin{align}\label{eq:aux_well_smooth}
	\partial_t v_{\epsilon j}+\mathcal{L}^{\alpha_j}
	\big(t,x,v_{\epsilon j}(t,x), D v_{\epsilon j}(t,x),D^2 v_{\epsilon j}(t,x)\big)
	-J^{\alpha_j}(t,x) v_{\epsilon j} \ge 0.
\end{align}
\end{lem}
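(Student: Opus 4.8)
The plan is to exploit the selection $j=\argmin_i v_{\epsilon i}(t,x)$ to switch off the obstacle in the auxiliary switching system \eqref{aux_switch}, thereby reducing the problem locally to a single \emph{linear} equation whose supersolution property survives mollification. Recall that $v^\epsilon$ is a viscosity solution, hence a supersolution, of \eqref{aux_switch}. Since $v_{\epsilon i}=\rho_\epsilon\star\bar v_i^\epsilon$ with $\bar v_i^\epsilon(t,x)=v_i^\epsilon(t+\epsilon^2,x)$ and $\supp(\rho_\epsilon)\subset(0,\epsilon^2)\times B(0,\epsilon)$, the value $v_{\epsilon j}(t,x)$ is an average of $v_j^\epsilon$ over the cylinder $(t,t+\epsilon^2)\times B(x,\epsilon)$.

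First I would prove that the obstacle is strictly inactive along this cylinder. Fix $(t,x)\in Q_T$ and let $(t',x')$ be any point of the form $(t-s+\epsilon^2,x-e)$ with $(s,e)\in\supp(\rho_\epsilon)$. Writing $v_j^\epsilon(t',x')-v_i^\epsilon(t',x')$ as $[v_j^\epsilon(t',x')-v_{\epsilon j}(t,x)]+[v_{\epsilon j}(t,x)-v_{\epsilon i}(t,x)]+[v_{\epsilon i}(t,x)-v_i^\epsilon(t',x')]$, the middle bracket is $\le 0$ by the definition of $j$, while the outer brackets are each bounded by the oscillation of a $|\cdot|_1$-function over the above cylinder, namely by $3\epsilon\sup_i[v_i^\epsilon]_1$ (using the half-Hölder bound $|t'-t''|^{1/2}\le\epsilon$ in time and the Lipschitz bound $|x'-x''|\le 2\epsilon$ in space). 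Hence $v_j^\epsilon(t',x')-v_i^\epsilon(t',x')\le 6\epsilon\sup_i[v_i^\epsilon]_1\le\tfrac34 k<k$ for every $i\neq j$, by the standing hypothesis $\epsilon\le(8\sup_i[v_i^\epsilon]_1)^{-1}k$. This is the key estimate. By continuity the strict inequality $v_j^\epsilon<\mathcal{M}^j v^\epsilon$ persists on a full neighbourhood of each such $(t',x')$.

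Next, on the open set where $v_j^\epsilon<\mathcal{M}^j v^\epsilon$, the supersolution inequality $F_j^\epsilon\ge 0$ forces the first entry of the maximum to be nonnegative, so $\partial_t v_j^\epsilon+\min_{0\le s\le\epsilon^2,|e|\le\epsilon}\big(\mathcal{L}^{\alpha_j}(t+s-\epsilon^2,x+e,\cdots)-J^{\alpha_j}(t+s-\epsilon^2,x+e)v_j^\epsilon\big)\ge 0$ in the viscosity sense. Since a minimum over the shifts is nonnegative iff each term is, $v_j^\epsilon$ is a viscosity supersolution of the single linear equation with coefficients frozen at $(t+s-\epsilon^2,x+e)$, for every fixed admissible $(s,e)$. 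Translating the independent variables then shows that each shifted copy $(t,x)\mapsto v_j^\epsilon(t-s+\epsilon^2,x-e)$ is a viscosity supersolution of one and the same linear equation $\partial_t w+\mathcal{L}^{\alpha_j}(t,x,\cdots)-J^{\alpha_j}(t,x)w=0$, with coefficients now evaluated at the unshifted point $(t,x)$.

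Finally I would pass from these copies to their mollified average. Because the relevant equation is linear in $(w,Dw,D^2w,w(t,\cdot))$ — including the nonlocal term $J^{\alpha_j}$ — convex combinations of supersolutions remain supersolutions, and $v_{\epsilon j}=\rho_\epsilon\star\bar v_j^\epsilon$ is precisely the integral average over $(s,e)\in\supp(\rho_\epsilon)$ of the copies $v_j^\epsilon(\cdot-s+\epsilon^2,\cdot-e)$. Approximating this integral by Riemann sums of convex combinations and using stability of viscosity supersolutions under local uniform limits (cf.\ the appendix of \cite{Jakobsen:2005tp}), $v_{\epsilon j}$ is itself a viscosity supersolution of the unshifted linear equation near $(t,x)$; being smooth, it satisfies the inequality classically, which is exactly \eqref{eq:aux_well_smooth}. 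The main obstacle is the bookkeeping of the viscosity (super)solution notion for the nonlocal switching system through the freezing, translation, and averaging steps — in particular verifying that the global-in-$x$ integral operator $J^{\alpha_j}$ is handled correctly so that linearity really does transmit the supersolution property to the mollification.
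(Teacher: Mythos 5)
Your proposal is correct and follows essentially the same route as the paper's proof: the same obstacle-slackness estimate under the hypothesis $\epsilon \le (8\sup_i[v_i^\epsilon]_1)^{-1}k$ (your three-bracket decomposition gives $6\epsilon\sup_i[v_i^\epsilon]_1\le\tfrac34 k$, the paper's mollifier bookkeeping gives $8\epsilon\max_i[v_i^\epsilon]_1\le k$), the same change of variables turning the shifted copies $v_j^\epsilon(\cdot-s+\epsilon^2,\cdot-e)$ into supersolutions with unshifted coefficients, and the same Riemann-sum/convex-combination/stability argument resting on the adaptation of Lemma 6.3 and the appendix of \cite{Jakobsen:2005tp}. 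The only cosmetic difference is ordering: you pass from the switching system to the single linear equation for each copy before averaging, whereas the paper averages supersolutions of the system and uses the strict obstacle inequality to drop to the linear equation at the point $(t,x)$ at the end; both are valid.
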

We postpone the proof of this lemma. From this lemma it follows that
$w$ is an approximate supersolution to the scheme \eqref{abs_scheme} when
$\epsilon \le (8 \sup_i[v_i^\epsilon]_1)^{-1} k$:
\begin{equation}\label{appr_sch}
	S\big(h,t,x,w(t,x),[w]_{t,x}\big)
	\ge -E_2(\bar{K},h,\epsilon)
	\quad \text{in}\quad \mathcal{G}_h^+,
\end{equation}
where $\bar{K}$ comes from Lemma \ref{aux_switch_well}. 
To see this, let $(t,x)\in Q_T$ and set $j := {\argmin}_{i}v_{\epsilon i}(t,x)$. 
At $(t,x)$, $w(t,x)= v_{\epsilon j}(t,x)$ 
and $w\le v_{\epsilon j}$ in $\mathcal{G}_h$. 
Hence (S1) implies that
$$
S\big(h, t, x, w(t,x), [w]_{t,x}\big)
\ge S\big(h,t,x,v_{\epsilon j}(t,x),[v_{\epsilon j}]_{t,x}\big).
$$
By consistency (S3)(ii) we have
\begin{align*}
	& S\big (h, t, x, v_{\epsilon j}(t,x),[v_{\epsilon j}]_{t,x}\big)\\
	& \ge \partial_t v_{\epsilon j}+ F\big(t,x,v_{\epsilon j}(t,x),D v_{\epsilon j},D^2 v_{\epsilon j},
	v_{\epsilon j}(t, \cdot)\big)- E_2(\bar{K}, h,\epsilon),\\
	&\ge \partial_t v_{\epsilon j}+\mathcal{L}^{\alpha_j}
	\big(t,x, v_{\epsilon j}(t,x), D v_{\epsilon j}(t,x), D^2 v_{\epsilon j}(t,x)\big)-J^{\alpha_j}(t,x)
	v_{\epsilon j}- E_2(\bar{K}, h,\epsilon),
\end{align*}
and \eqref{appr_sch} then follows from Lemma \ref{ap_smooth_sup}.

To derive the lower bound on the error $u_h- u$, we take $\epsilon =
(8 \sup_i[v_i^\epsilon]_1)^{-1} k$ and use \eqref{appr_sch} and
comparison Lemma \ref{cmpare_scm} to get
\begin{align*}
	u_h-w \le e^{\mu t}|(g_{h}-w(0,\cdot))^+|
	+2t e^{\mu t}E_2(\bar{K},h,\epsilon)
	\quad \text{in $\mathcal{G}_h$.}
\end{align*}
By \eqref{eq:aux_well_2}, $|w-u|\le C(\epsilon + k+ k^{\frac 13})$,
and hence
\begin{align*}
	u_h-u \le e^{\mu t}|(g_{h}-w(0,\cdot))^+|
	+2t e^{\mu t}E_2(\bar{K}, h,\epsilon)
	+C(\epsilon + k+ k^{\frac 13})\quad \text{in}
	\quad \mathcal{G}_h,
\end{align*}
possibly with a new constant $C$. Since $\epsilon=C k$, the
proof is complete by minimizing the right hand side with respect to $\epsilon$.
\end{proof}

\begin{proof}[Proof of Lemma \ref{ap_smooth_sup}]
We begin by fixing $(t,x)\in Q_T$ and set 
$j =\argmin_{i} v_{\epsilon i}(t,x) $. Then
\begin{align*}
	v_{\epsilon j}(t,x)-\mathcal{M}^j v_\epsilon(t,x) 
	=\max_{i\neq j}\big\{ v_{\epsilon j}(t,x)-v_{\epsilon i}-k\big\}\le -k.
\end{align*} 
Therefore, by the H\"{o}lder continuity of $\bar{v}^\epsilon$ and basic 
properties of mollifiers,
\begin{align*}
	&\bar{v}_j^\epsilon(t,x)-\mathcal{M}^j\bar{v}^\epsilon(t,x)
	\le -k +2\max_i[v_i^\epsilon]_1 2\epsilon
\end{align*}
and
\begin{align*}
	\bar{v}_j^\epsilon(s,y)-\mathcal{M}^j\bar{v}^\epsilon(s,y)
	\le -k+2\max_i[v_i^\epsilon]_1
	( 2\epsilon+|x-y|+|t-s|^{\frac 12}),
\end{align*}
for all $(s,y)\in Q_T$. Consequently, if $|x-y|< \epsilon$, $|t-s|< \epsilon^2$, and 
$\epsilon \le (8 \sup_i[v_i^\epsilon]_1)^{-1} k$, then 
\begin{align}\label{eq:obstacle_regular}
	\bar{v}_j^\epsilon(s,y)-\mathcal{M}^j\bar{v}^\epsilon (s,y)< 0.
\end{align}

To continue we need the following remark. Let $u^1,\dots, u^k$ be
functions satisfying \eqref{eq:obstacle_regular} at $(t,x)$, then any linear
combination $u^\lambda=\sum_{i=1}^k\lambda_iu^i$ with $\lambda_i\geq0$ and 
$\sum_{i=1}^k\lambda_i=1$, also satisfies \eqref{eq:obstacle_regular}
at $(t,x)$. If, in addition, $u^1,\dots, u^k$ are supersolutions of
\begin{align}\label{newsystem}
	\max\big\{ \partial_t u_{j}+\mathcal{L}^{\alpha_j}(t,x,u_j,Du_j, D^2u_j)
	-J^{\alpha_j}[u_j](t,x); u_j-\mathcal{M}^j u\big\}= 0,
\end{align}
then in view of \eqref{eq:obstacle_regular} they are also
supersolutions of the linear equation
\begin{align}\label{eq:convex-sup-solution.}
	\partial_tu_{j}+\mathcal{L}^{\alpha_j}(t,x,u_j,Du_j,D^2u_j)
	-J^{\alpha_j}[u_j](t,x)=0
\end{align}
at $(t,x)$. An easy adaptation of the proof of Lemma 6.3 in
\cite{Jakobsen:2005tp} then shows that $u^\lambda$ is also a
viscosity supersolution of \eqref{eq:convex-sup-solution.} at $(t,x)$.

A change of variables reveals that $\big\{\bar{v}^\epsilon(\cdot-s,
\cdot-e)\big\}_{s, e}$, $0\le s<\epsilon^2$, $|e|<
\epsilon$, is a family of supersolutions of
\eqref{newsystem} with $[\bar{v}_j^\epsilon-\mathcal{M}^j
\bar{v}^\epsilon](t-s,x-e )< 0.$ We note that by approximating
the function $v_{\epsilon j}$ by a Riemann sum, we see that it
is the limit of convex combinations of $\bar{v}^\epsilon(\cdot-s,\cdot-e)$. 
In view of the above remark, these convex
combinations are supersolutions of \eqref{newsystem}, and hence by
the stability result for viscosity supersolutions, so is the limit
$v_{\epsilon j}$. Finally, since this function is smooth it is also a
classical supersolution of \eqref{newsystem} 
and hence \eqref{eq:aux_well_smooth} holds.
\end{proof}

\appendix

\section{An example of a monotone discretization of $L^{\alpha}$}\label{sec:Lh}

Let $\{e_i\}_{i=1}^N$ be the standard basis of $\mathbb{R}^N$ and
$a_{ij}$ the $ij$-th element of the matrix $a$. 
Kushner and Dupuis \cite{Kushner:1992mq} suggest the
following discretization of $L^{\alpha}$ in \eqref{eq:integro_pde}:
\begin{align*}
	L_h^\alpha \phi :=
	\sum_{i=1}^N\Big[a_{ii}^\alpha\Delta_{ii}+\sum_{i\neq j}
	\big(a_{ij}^{\alpha+}\Delta_{ij}^+-a_{ij}^{\alpha-}\Delta_{ij}^-\big)
	+ b_i^{\alpha +}\delta_i^+-b_i^{\alpha-}\delta_i^{ -}\Big]\phi,
\end{align*} 
where $b^+=\max\{b,0\}$, $b^-=(-b)^+$, and
\begin{align*}
	\delta_i^{\pm} \phi(x) &= \pm\frac{1}{\Delta x}
	\big\{\phi(x\pm e_i\Delta x)-\phi(x)\big\},\\ \Delta_{ii} \phi(x) 
	& = \frac{1}{\Delta x^2}\big\{\phi(x+ e_i \Delta x)
	-2\phi(x)+\phi(x- e_i \Delta x)\big\},\\ 
	\Delta_{ij}^+ \phi(x) &= \frac{1}{2\Delta x^2}
	\big\{2\phi(x)+\phi(x+e_i\Delta x+e_j\Delta x)
	+\phi(x-e_i\Delta x-e_j\Delta x)\big\}\\ &\quad
	-\frac{1}{2\Delta x^2}\big\{\phi(x+e_i\Delta x)+\phi(x-e_i\Delta x)
	+\phi(x+e_j\Delta x)+\phi(x-e_j\Delta x)\big\}\\ \Delta_{ij}^-\phi(x) 
	& = \frac{1}{2\Delta x^2}\big\{2\phi(x)+\phi(x+e_i\Delta x-e_j\Delta x)
	+\phi(x-e_i\Delta x+e_j\Delta x)\big\}\\ &\quad 
	-\frac{1}{2\Delta x^2}\big\{\phi(x+e_i\Delta x)
	+\phi(x-e_i\Delta x)+\phi(x+e_j\Delta x)
	+\phi(x-e_j\Delta x)\big\}.
\end{align*}
By Taylor expansion it is easy to check that the truncation error is
given by \eqref{consistL}. Moreover $L_h$ can be written in the form \eqref{monL} with
\begin{gather*}
	l_{h,\beta,\beta\pm e_i}^{\alpha,n} = 
	\frac{1}{\Delta x^2}\Big[a_{ii}^\alpha(t,x)
	-\frac12\sum_{j\neq i} |a^\alpha_{ij}(t,x)|\Big]
	+\frac{b_i^{\alpha\pm}(t,x)}{\Delta x},\\ 
	l_{h,\beta,\beta+ e_i\pm e_j}^{\alpha,n}
	=\frac{a_{ii}^{\alpha \pm}(t,x)}{\Delta x^2},
	\qquad l_{h,\beta,\beta-e_i\pm e_j}^{\alpha,n}
	= \frac{a_{ii}^{\alpha \pm}(t,x)}{\Delta x^2},\qquad i\neq j,
\end{gather*}
and $l_{h,\beta,\bar\beta}^{\alpha,n}=0$ otherwise. This approximation
is monotone if $l_{h,\beta,\bar\beta}^{\alpha,n}\geq0$ for all
$\alp,\beta,\bar\beta,n$ and $\Dx>0$, which happens to 
be the case, e.g., if $a$ is diagonally dominant:
$$
a_{ii}^\alpha(t,x)-\sum_{j\neq i} 
|a_{ij}^\alpha(t,x)|\ge 0\quad 
\text{in $Q_T$, for each $\alpha\in\mathcal{A}$.}
$$

\end{document}